\theoremstyle{plain}
\newtheorem{theorem}                 {Theorem}      [section]
\newtheorem{proposition}  [theorem]  {Proposition}
\newtheorem{corollary}    [theorem]  {Corollary}
\newtheorem{lemma}        [theorem]  {Lemma}
\theoremstyle{definition}
\newtheorem{example}      [theorem]  {Example}
\newtheorem{remark}       [theorem]  {Remark}
\newtheorem{definition}   [theorem]  {Definition}
\numberwithin{equation}{section}
\def \R{{\mathbb R}}
\def \rn{{\mathbb R}}
\def \s{{\mathbb S}}
\def \C{{\mathbb C}}
\def \cn{{\mathbb C}}
\def \hn{{\mathbb H}}
\def \H{{\mathbb H}}
\def \B{\mathcal B}
\def \S{\mathcal S}
\def \nab#1#2{\hbox{$\nabla$\kern -.3em\lower 1.0 ex
\hbox{$#1$}\kern -.1 em {$#2$}}}
\def\Re{\mathfrak R\mathfrak e}
\def \g{\mathfrak{g}}
\def \h{\mathfrak{h}}
\def \k{\mathfrak{k}}
\def \m{\mathfrak{m}}
\def \p{\mathfrak{p}}
\def \un{\mathfrak{u}}
\def \GLR#1{\text{\bf GL}_{#1}(\rn)}
\def \glr#1{\mathfrak{gl}_{#1}(\rn)}
\def \GLC#1{\text{\bf GL}_{#1}(\cn)}
\def \glc#1{\mathfrak{gl}_{#1}(\cn)}
\def \GLH#1{\text{\bf GL}_{#1}(\hn)}
\def \SO#1{\text{\bf SO}(#1)}
\def \so#1{\mathfrak{so}(#1)}
\def \SOO#1#2{\text{\bf SO}(#1,#2)}
\def \U#1{\text{\bf U}(#1)}
\def \u#1{\mathfrak{u}(#1)}
\def \SU#1{\text{\bf SU}(#1)}
\def \Sp#1{\text{\bf Sp}(#1)}
\def \sp#1{\mathfrak{sp}(#1)}
\DeclareMathOperator{\Div}{div}
\DeclareMathOperator{\trace}{trace}
\numberwithin{equation}{section}
\begin{document}

\title[Biharmonic functions]
{Biharmonic functions on the\\ classical compact simple Lie groups}


\author{Sigmundur Gudmundsson, Stefano Montaldo and Andrea Ratto}

\address{Mathematics, Faculty of Science\\ University of Lund\\
Box 118, Lund 221\\
Sweden}
\email{Sigmundur.Gudmundsson@math.lu.se}

\address{Universit\`a degli Studi di Cagliari\\
Dipartimento di Matematica e Informatica\\
Via Ospedale 72\\
09124 Cagliari, Italia}
\email{montaldo@unica.it}

\address{Universit\`a degli Studi di Cagliari\\
Dipartimento di Matematica e Informatica\\
Viale Merello 93\\
09123 Cagliari, Italia}
\email{rattoa@unica.it}

\thanks{Work partially supported by: PRID 2015 -- Universit\`a degli Studi di Cagliari}

\allowdisplaybreaks

\begin{abstract}
The main aim of this work is to construct several new families of proper biharmonic functions defined on open subsets of the classical compact simple Lie groups $\SU n$, $\SO n$ and $\Sp n$. We work in a geometric setting which connects our study with the theory of
submersive harmonic morphisms. We develop a general duality principle and use this to interpret our new examples on the Euclidean sphere $\s ^3$ and on the hyperbolic space $\H^3$.
\end{abstract}

\subjclass[2010]{58E20, 31A30, 35R03}

\keywords{Biharmonic functions, Lie groups}

\maketitle

\section{Introduction}

To illustrate the main theme of this paper, let us first consider a collection of rather simple functions $f_n:\C^*\to\C$. Here $n$ is a natural number, $\C^*$ is the punctured plane of non-zero complex numbers and
\begin{equation}\label{examplebis}
f_n(z)=\left ( \frac{z}{\bar{z}} \right )^n.
\end{equation}
Further let $\Delta$ denote the standard Laplace operator in the Euclidean plane $\cn$ and $r$ be a positive integer. Then it is not difficult to prove by induction the following interesting formula:
\begin{equation}\label{magic-formula-plane}
 (\Delta^rf_n)(z)=\left(-\frac{4}{|z|^2}\right)^{r}\left(\prod_{k=1}^r(n^2-(k-1)^2)\right)f_n(z).
\end{equation}
Here $\Delta^r$ is the iterated Laplacian given by $\Delta^r=\Delta(\Delta^{(r-1)})$ and $\Delta^0 f=f$. As a direct consequence of equation \eqref{magic-formula-plane}, we have the following result.

\begin{proposition}\label{proposition-plane}
Let $n$ be a natural number and the function $f_n:\C^*\to\C$ be defined as in \eqref{examplebis}. Then $f_n$ is a proper $r$-harmonic function (i.e. $\Delta^r f_n=0$ and $\Delta^{r-1}f_n\neq 0$) if and only if $r=n+1$.
\end{proposition}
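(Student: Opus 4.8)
The plan is to read everything off the ``magic formula'' \eqref{magic-formula-plane}. I would either take that identity as given, or first check it by a short induction on $r$: with $\Delta$ the Laplacian, which in complex coordinates is $\Delta = 4\,\partial^2/\partial z\,\partial\bar z$, and $f_n(z)=z^n\bar z^{-n}$, the base case $r=1$ is the direct computation $\Delta f_n = 4\,\partial_z\partial_{\bar z}(z^n\bar z^{-n}) = -\tfrac{4n^2}{|z|^2}f_n$, which agrees with the right-hand side. For the inductive step, write $\Delta^{r-1}f_n = \big(-\tfrac{4}{|z|^2}\big)^{r-1}c_{r-1}(n)\,f_n$ with $c_s(n)=\prod_{k=1}^{s}(n^2-(k-1)^2)$, observe that $|z|^{-2(r-1)}f_n = z^{\,n-r+1}\bar z^{\,-n-r+1}$ is again a monomial in $z$ and $\bar z$, and apply $\Delta = 4\,\partial_z\partial_{\bar z}$: this produces the factor $4(n-r+1)\bigl(-(n+r-1)\bigr) = -4\bigl(n^2-(r-1)^2\bigr)$, i.e. exactly $-\tfrac{4}{|z|^2}$ times the new term $n^2-(r-1)^2$ that turns $c_{r-1}(n)$ into $c_r(n)$. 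Keeping track of this single factor is the only point that needs a little care.

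Granting \eqref{magic-formula-plane}, the proposition is elementary. For every $z\in\C^*$ we have $|z|^2>0$ and $f_n(z)\neq 0$, so the function $\Delta^r f_n$ is identically zero precisely when the numerical coefficient $\prod_{k=1}^r(n^2-(k-1)^2)$ vanishes (and otherwise $\Delta^r f_n$ is nowhere zero, so there is no ambiguity in the phrase $\Delta^{r-1}f_n\neq 0$). That product consists of the $r$ factors $n^2-0^2,\ n^2-1^2,\ \dots,\ n^2-(r-1)^2$, and since $n$ is a non-negative integer it vanishes if and only if $n\in\{0,1,\dots,r-1\}$, equivalently if and only if $r\geq n+1$.

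Applying this observation to $r$ and then to $r-1$ gives $\Delta^r f_n\equiv 0 \iff r\geq n+1$, while $\Delta^{r-1}f_n\equiv 0 \iff r-1\geq n+1 \iff r\geq n+2$; equivalently $\Delta^{r-1}f_n\not\equiv 0 \iff r\leq n+1$. Hence $f_n$ is proper $r$-harmonic, i.e. $\Delta^r f_n=0$ and $\Delta^{r-1}f_n\neq 0$, if and only if $r\geq n+1$ and $r\leq n+1$, that is, if and only if $r=n+1$. I do not expect any genuine obstacle here: the combinatorial step is immediate once \eqref{magic-formula-plane} is available, and the only mild bookkeeping lies in the inductive verification of that formula (together with the trivial degenerate case $n=0$, where $f_0\equiv 1$ is harmonic and thus proper $1$-harmonic, consistent with $r=n+1=1$).
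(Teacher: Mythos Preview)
Your proposal is correct and follows exactly the route the paper indicates: the paper states that the proposition is ``a direct consequence'' of the formula \eqref{magic-formula-plane}, and you read it off in precisely that way, additionally supplying the short induction that the paper leaves to the reader. There is nothing to add.
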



\vskip .2cm

The literature on biharmonic functions is vast, but usually the domains are either surfaces or open subsets of flat Euclidean space.  In this paper we construct the first proper biharmonic functions from open subsets of the classical compact simple Lie groups $\SU n$, $\SO n$ and $\Sp n$, equipped with their standard biinvariant Riemannian metrics.

It is a well-known fact that the matrix coefficients of any finite-dimensional irreducible representation of a compact semi-simple Lie group are eigenfunctions of the corresponding Laplace-Beltrami operator.  For this see \cite{Gud-Sve-Ville-1} or Proposition 5.28 of \cite{Kna}.

In Theorems \ref{teo-quotient-linear}, \ref{theorem-orthogonal-group} and \ref{teo-quotient-linear-quaternionic} we produce a large collection of complex-valued biharmonic functions on open subsets of $\SU n$, $\SO n$ and $\Sp n$ respectively.   They are all quotients of linear combinations of the matrix coefficients for the {\it standard} irreducible representation of the corresponding group. In Theorem \ref{theorem-biharmonicity-2x2-determinants} we then construct solutions which are quotients of $2\times2$ determinants from the matrix of the same representation of $\SU n$.

Our investigation into the special orthogonal group $\SO 4$ produces some new examples of proper biharmonic functions on open subsets of the $3$-dimensional round sphere $\s ^3$.  The existence of proper biharmonic functions on $3$-dimensional space forms was already studied in \cite{Caddeo}, where the radially symmetric examples were constructed. In Theorem \ref{theo-duality} we develop a general duality principle which we use to construct new proper biharmonic functions on the non-compact $3$-dimensional hyperbolic space $\H ^3$.

We conclude the paper with a short appendix providing a complementary approach that hopefully will be useful to implement our calculations by means of some suitable software.

\section{Proper $r$-harmonic functions}\label{section-r-harmonic}

Let $(M,g)$ be a smooth manifold equipped with a semi-Riemannian metric $g$.  We complexify the tangent bundle $TM$ of $M$ to $T^{\cn}M$ and extend the metric $g$ to a complex-bilinear form on $T^{\cn}M$.  Then the gradient $\nabla f$ of a complex-valued function $f:(M,g)\to\cn$ is a section of $T^{\cn}M$.  In this situation, the well-known linear {\it Laplace-Beltrami} operator (alt. tension field) $\tau$ on $(M,g)$ acts on $f$ as follows
$$
\tau(f)=\Div (\nabla f)=\frac{1}{\sqrt{|g|}} \frac{\partial}{\partial x_j}\left(g^{ij}\, \sqrt{|g|}\, \frac{\partial f}{\partial x_i}\right).
$$
For two complex-valued functions $f,h:(M,g)\to\cn$ we have the following well-known relation
$$\tau(fh)=\tau(f)\,h+2\,\kappa(f,h)+f\,\tau(h),$$
where the {\it conformality} operator $\kappa$ is given by
$$
\kappa(f,h)=g(\nabla f,\nabla h).
$$
For a positive integer $r$, the iterated Laplace-Beltrami operator $\tau^r$ is defined by
$$
\tau^{0} (f)=f,\quad \tau^r (f)=\tau(\tau^{(r-1)}(f)).
$$
\begin{definition}\label{definition-proper-r-harmonic} For a positive integer $r$, we say that a complex-valued function $f:(M,g)\to\cn$ is
\begin{enumerate}
\item[(a)] {\it $r$-harmonic} if $\tau^r (f)=0$,
\item[(b)] {\it proper $r$-harmonic} if $\tau^r (f)=0$ and $\tau^{(r-1)} (f)$ does not vanish identically.
\end{enumerate}
\end{definition}

It should be noted that the {\it harmonic} functions are exactly the $1$-harmonic
and the {\it biharmonic} functions are the $2$-harmonic ones.  In some texts, the $r$-harmonic functions are also called {\it polyharmonic} of order $r$.

We shall now develop some connections between the theory of $r$-harmonic functions and the notion of harmonic morphisms.  More specifically, we recall that a map $\pi:(\hat M,\hat g)\to(M,g)$ between two semi-Riemannian manifolds is a \emph{harmonic morphism} if it pulls back germs of harmonic functions to germs of harmonic functions. The standard reference on this topic is the book \cite{BW-book} of Baird and Wood.   We also recommend the updated online bibliography \cite{Gud-bib}.

\begin{proposition}\label{prop:lift-tension}
Let $\pi:(\hat M,\hat g)\to (M,g)$ be a submersive harmonic morphism
from a semi-Riemannian manifold $(\hat M,\hat g)$ to a Riemannian manifold
$(M,g)$. Further let $f:(M,g)\to\C$ be a smooth function and
$\hat f:(\hat M,\hat g)\to\C$ be the composition $\hat f=f\circ\pi$.
If $\lambda:\hat M\to\rn^+$ is the dilation of $\pi$ then the tension
field satisfies
$$
\tau(f)\circ\pi=\lambda^{-2}\tau(\hat f)\ \ \text{and}\ \ \tau^r(f)\circ\pi=\lambda^{-2}\tau(\lambda^{-2}\tau^{(r-1)}(\hat f))
$$
for all positive integers $r\ge 2$.
\end{proposition}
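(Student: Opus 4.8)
The heart of Proposition \ref{prop:lift-tension} is the first identity $\tau(f)\circ\pi=\lambda^{-2}\tau(\hat f)$; the formula for $\tau^r$ then follows from it by a short induction. So my plan is: (1) establish the base identity from the defining property of a harmonic morphism and its characterization via the dilation; (2) bootstrap to the iterated statement.

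For step (1), recall the basic structure theory from Baird--Wood \cite{BW-book}: a map $\pi$ is a harmonic morphism if and only if it is harmonic and horizontally weakly conformal, and in that case, for any smooth $f$ on the target, the chain rule for the tension field of a composition gives
$$
\tau(f\circ\pi)=\trace\nabla d\pi(\nabla f)+d\pi(\tau(\pi))+(\text{trace over the horizontal space of } \operatorname{Hess} f\,(d\pi\cdot,d\pi\cdot)).
$$
Since $\pi$ is harmonic, $\tau(\pi)=0$; and horizontal weak conformality with dilation $\lambda$ means $\hat g(X,Y)=\lambda^{2}\,g(d\pi(X),d\pi(Y))$ for horizontal $X,Y$, so the horizontal trace of $\operatorname{Hess} f$ pulled back is exactly $\lambda^{2}$ times the full trace of $\operatorname{Hess} f$ on $M$, i.e. $\lambda^{2}\,\tau(f)\circ\pi$. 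One must also check the remaining term $\trace\nabla d\pi(\nabla f)$ vanishes: this is where the submersivity and the harmonic-morphism hypothesis combine — fibers of a harmonic morphism are minimal where horizontal, and the relevant second fundamental form terms cancel; alternatively one simply quotes the standard fact (\cite{BW-book}, and see also \cite{Gud-Sve-Ville-1}) that composition with a harmonic morphism satisfies precisely $\tau(f\circ\pi)=\lambda^{2}\,(\tau(f)\circ\pi)$. Rearranging gives $\tau(f)\circ\pi=\lambda^{-2}\,\tau(\hat f)$.

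For step (2), I argue by induction on $r$. The case $r=1$ is the base identity, and applying it with $f$ replaced by $\tau(f)$ gives, for $r=2$,
$$
\tau^{2}(f)\circ\pi=\tau(\tau(f))\circ\pi=\lambda^{-2}\,\tau\bigl((\tau(f))\circ\pi\bigr)=\lambda^{-2}\,\tau(\lambda^{-2}\tau(\hat f)),
$$
which is the claimed formula for $r=2$. For the inductive step, suppose the formula holds for $r-1$; apply the base identity to the function $\tau^{(r-1)}(f)$ on $M$ to get $\tau^{r}(f)\circ\pi=\lambda^{-2}\,\tau\bigl(\tau^{(r-1)}(f)\circ\pi\bigr)$, then substitute the inductive hypothesis $\tau^{(r-1)}(f)\circ\pi=\lambda^{-2}\tau(\lambda^{-2}\tau^{(r-2)}(\hat f))=\tau^{(r-1)}(\hat f)\cdot(\text{corrections})$ — more cleanly, note the induction actually shows $\tau^{(r-1)}(f)\circ\pi$ equals an iterated expression whose outermost two operations are $\lambda^{-2}\tau(\lambda^{-2}\,\cdot\,)$, and feeding this into the base identity one more time produces the stated right-hand side $\lambda^{-2}\tau(\lambda^{-2}\tau^{(r-1)}(\hat f))$ after recognizing that the inner block is precisely $\tau^{(r-1)}(\hat f)$ when read off the same recursion one step down.

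**Main obstacle.** The only genuinely delicate point is step (1): verifying that no extra curvature/second-fundamental-form term survives in the composition formula, i.e. that the full tension field of $f\circ\pi$ reduces exactly to $\lambda^{2}\,\tau(f)\circ\pi$ with no remainder. This is classical for harmonic morphisms (it is essentially the infinitesimal version of the "pull back harmonic to harmonic" property, extended from harmonic $f$ to arbitrary $f$), but it relies on the precise interplay between horizontal conformality and the harmonicity of $\pi$, and care is needed because $\hat M$ is only semi-Riemannian — one should confirm the trace identities still hold with the (possibly indefinite) metric $\hat g$, which they do since the computation is tensorial and only uses that $g$ on the base is Riemannian so that $\lambda>0$ is well defined. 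Everything after that is bookkeeping with the recursion defining $\tau^{r}$.
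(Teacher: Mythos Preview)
Your approach is essentially the paper's: use the composition law for the tension field together with harmonicity and horizontal conformality of $\pi$ to get $\tau(\hat f)=\lambda^{2}\,\tau(f)\circ\pi$, then bootstrap by applying this with $f$ replaced by $\tau^{(r-1)}(f)$. The only issue is that your composition formula is garbled: the correct identity (which the paper uses verbatim) has just two terms,
\[
\tau(f\circ\pi)=\trace\nabla df(d\pi,d\pi)+df(\tau(\pi)),
\]
so there is no separate ``$\trace\nabla d\pi(\nabla f)$'' term to kill with minimality of fibers, and the harmonicity term is $df(\tau(\pi))$, not $d\pi(\tau(\pi))$. Once you write the formula correctly, your ``main obstacle'' disappears and the argument matches the paper's line for line; the paper's induction is also exactly your $r=2$ computation, with the general step phrased as ``set $h=\tau^{(r-1)}(f)$, $\hat h=h\circ\pi$ and apply the base identity to $h$'' rather than the tangled bookkeeping in your final paragraph.
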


\begin{proof}
The harmonic morphism $\pi$ is a horizontally conformal, harmonic map, see \cite{Fuglede96} or \cite{BW-book}. Hence the well-known composition law for the tension field gives
\begin{eqnarray*}
\tau(\hat f)
&=&\tau(f\circ\pi)\\
&=&\text{trace}\nabla df(d\pi,d\pi)+ df(\tau(\pi))\\
&=&\lambda^2\tau(f)\circ\pi+df(\tau(\pi))\\
&=&\lambda^2\tau(f)\circ\pi.
\end{eqnarray*}
For the second statement, set $h=\tau(f)$ and
$\hat h=\lambda^{-2}\cdot\tau(\hat f)$.
Then $\hat h=h\circ\pi$ and it follows from the first step that
$$\tau(\lambda^{-2}\tau(\hat f))=\tau(\hat h)
=\lambda^2\tau(h)\circ\pi=\lambda^2\tau^2(f)\circ\pi,$$
or equivalently,
$$\tau^2(f)\circ\pi=\lambda^{-2}\tau(\lambda^{-2}\tau(\hat f)).$$
The rest follows by induction.
\end{proof}

\begin{example}\label{example-proper-biharmonic-S^3}
We equip $\R^4$ with its standard Euclidean metric, satisfying
$$
(x,y)=x_1y_1+x_2y_2+x_3y_3+x_4y_4.
$$
The round $3$-dimensional unit sphere $\s^3$ in $\rn^4$ is given by
$$
\s^3=\{(x_1,x_2,x_3,x_4)\in\R^4|\ x_1^2+x_2^2+x_3^2+x_4^2=1\}.
$$
The radial projection $\pi:\R^4\setminus\{0\}\to \s^3$ with $\pi:x\mapsto x/|x|$ is a well-known harmonic morphism and its dilation satisfies $\lambda^{-2}(x)=|x|^2$. Let $p,q\in\cn^4$ be linearly independent, $(q,q)=0$, $f:W\to\C$ be the function defined locally on $\s^3$ with
$$
f(x)=\frac{p_1x_1+\dots +p_4x_4}{q_1x_1+\dots +q_4x_4}
$$
and $\hat f=f\circ\pi$.  Then an easy calculation shows that
$$
\tau(f)=|x|^2\Delta\hat f=-\frac{2|x|^2(p,q)}{(q_1x_1+\dots +q_4x_4)^2}
$$
and
$$\tau^2(f)=|x|^2\Delta(|x|^2\Delta(\hat f))=0.
$$
Here $\Delta$ is the tension field on $\R^4$ i.e. the classical Laplace operator given by
$$
\Delta = \frac{\partial^2}{\partial x_1^2}+\frac{\partial^2}{\partial x_2^2}+\frac{\partial^2}{\partial x_3^2}+\frac{\partial^2}{\partial x_4^2}.
$$
These calculations show that if $(p,q)\neq 0$ then the local function $f:W\to\C$ is proper biharmonic on $\s^3$.
\end{example}

\begin{example}\label{example-hyperbolic}
Let $\R^4_1$ be the standard $4$-dimensional Minkowski space equipped with its Lorentzian metric
$$
(x,y)_L=-x_0y_0+x_1y_1+x_2y_2+x_3y_3.
$$
Bounded by the light cone, the open set
$$
U=\{x\in\R^4_1|\ (x,x)_L<0\ \text{and}\ 0<x_0\}
$$
contains the 3-dimensional hyperbolic space
$$
\H^3=\{(x_0,x_1,x_2,x_3)\in\R^4_1|\ (x,x)_L=-1\ \text{and}\ 0<x_0\}.
$$
Let $\pi:U\to \H^3$ be the radial projection given by
$$
\pi:x\mapsto \frac{x}{\sqrt{-(x,x)_L}}.
$$
This is a harmonic morphism and its dilation satisfies $\lambda^{-2}(x)=-|x|^2_L$, see \cite{Gud-7}. Let $p,q\in\cn^4_1$ be linearly independent, $(q,q)_L=0$, $f:W\to\C$ be the function defined locally on $\H^3$ with
$$
f(x)=\frac{p_0x_0+\dots +p_3x_3}{q_0x_0+\dots +q_3x_3}
$$
and $\hat f=f\circ\pi$.  Then
$$
\tau(f)=-|x|_L^2\Box\hat f=\frac{2|x|_L^2(p,q)_L}{(q_0x_0+\dots +q_3x_3)^2}$$
and
$$
\tau^2(f)=-|x|_L^2\Box(-|x|_L^2\Box(\hat f))=0.
$$
Here $\Box$ is the tension field on $\R^4_1$ i.e. the wave operator of d'Alembert given by
$$
\Box = -\frac{\partial^2}{\partial x_0^2}+\frac{\partial^2}{\partial x_1^2}+\frac{\partial^2}{\partial x_2^2}+\frac{\partial^2}{\partial x_3^2}.
$$
From this we see that if $(p,q)_L\neq 0$ then $f$ is proper biharmonic. It should be noted that $q\in\cn^4$ can easily be chosen such that $f:\H^3\to\cn$ is globally defined.
\end{example}

In the sequel, we shall often employ the following immediate
consequence of Proposition~\ref{prop:lift-tension}.

\begin{corollary}\label{prop:lift-proper-k-harmonic}
Let $\pi:(\hat M,\hat g)\to (M,g)$ be a submersive harmonic morphism,
from a semi-Riemannian manifold $(\hat M,\hat g)$ to a Riemannian manifold
$(M,g)$, with constant dilation.  Further let $f:(M,g)\to\C$ be a smooth function and
$\hat f:(\hat M,\hat g)\to\C$ be the composition $\hat f=f\circ\pi$.
Then the following statements are equivalent
\begin{enumerate}
\item[(a)] $\hat f:(\hat M,\hat g)\to\C$ is proper $r$-harmonic,
\item[(b)] $f:(M,g)\to\C$ is proper $r$-harmonic.
\end{enumerate}
\end{corollary}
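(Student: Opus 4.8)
The plan is to read the corollary off Proposition~\ref{prop:lift-tension}, the only extra ingredient being that a constant factor passes freely through the linear operator $\tau$.

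First I would record that, since $\pi$ has constant dilation, $\lambda^{-2}$ is a fixed nonzero real number, so $\tau(\lambda^{-2}\psi)=\lambda^{-2}\tau(\psi)$ for every smooth function $\psi$ on $\hat M$. Then, applying the first identity of Proposition~\ref{prop:lift-tension} --- i.e.\ $\tau(F)\circ\pi=\lambda^{-2}\,\tau(F\circ\pi)$ --- successively to $F=f,\ \tau(f),\ \tau^2(f),\dots$ and using the previous remark at each step, a short induction on $r$ gives
$$
\tau^r(f)\circ\pi=\lambda^{-2r}\,\tau^r(\hat f)\qquad\text{for all }r\ge 1 .
$$

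Next I would invoke that $\pi$, being a submersion, is an open map and --- in all the geometric situations we consider --- surjective onto its target; consequently a continuous function $\psi$ on $M$ vanishes identically on the relevant domain exactly when $\psi\circ\pi$ does. Applying this to $\psi=\tau^r(f)$ and to $\psi=\tau^{(r-1)}(f)$ in the displayed identity, and using $\lambda^{-2r}\neq 0$, yields
$$
\tau^r(f)\equiv 0\iff\tau^r(\hat f)\equiv 0\qquad\text{and}\qquad
\tau^{(r-1)}(f)\not\equiv 0\iff\tau^{(r-1)}(\hat f)\not\equiv 0 .
$$
Combining the two equivalences is precisely the assertion that $f$ is proper $r$-harmonic if and only if $\hat f$ is, which is (a)$\Leftrightarrow$(b).

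I do not expect a genuine obstacle here: everything hinges on the collapse of the lifting formula of Proposition~\ref{prop:lift-tension} to multiplication by the nonzero constant $\lambda^{-2r}$ once $\lambda$ is constant. The only points that warrant a line of care are the base case $r=1$ and the bookkeeping of the induction, together with the elementary fact that pullback by a (surjective) submersion detects identical vanishing; if $f$ is merely defined on an open subset of $M$, one restricts to the preimage of that subset, on which $\pi$ remains a submersive harmonic morphism with the same constant dilation.
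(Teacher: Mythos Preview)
Your proposal is correct and follows the same route as the paper: once $\lambda$ is constant, the lifting formula of Proposition~\ref{prop:lift-tension} collapses to $\tau^r(f)\circ\pi=\lambda^{-2r}\tau^r(\hat f)$, and the equivalence is immediate. You simply spell out the induction and the surjectivity-of-$\pi$ argument that the paper leaves implicit in the phrase ``direct consequence.''
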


\begin{proof}
It follows from Proposition~\ref{prop:lift-tension} that for any positive integer $r$ we have
$$
\tau^r(f)\circ\pi=\lambda^{-2r}\tau^r(\hat f).
$$
The statement is a direct consequence of these relations.
\end{proof}

\begin{remark}
The special case $r=2$ in Corollary~\ref{prop:lift-proper-k-harmonic} is partially a consequence of Theorem 3.1 in \cite{Ou96} (see also \cite{LoubeauOu96}).
\end{remark}

\begin{example}\label{ex-riemannian-submersions-biharmonic}
Let $G$ be a Lie group,  $K\subset H$ be compact subgroups of $G$ and $\k,\h,\g$ be their Lie algebras, respectively.  Then we have
the homogeneous fibration $$\pi:G/K\to G/H,\ \ \ \pi:aK\mapsto aH,$$ with fibres diffeomorphic to $H/K$.  Let $\m$ be an $\text{Ad}(H)$-invariant complement of $\h$ in $\g$ and $\p$ be an $\text{Ad}(K)$-invariant complement of $\k$ in $\h$. Then $\p\oplus\m$ is an $\text{Ad}(K)$-invariant complement of $\k$ in $\g$.

Let $<,>_{\m}$ be an $\text{Ad}(H)$-invariant scalar product on $\m$ inducing a $G$-invariant Riemannian metric $g$ on the homogeneous space $G/H$. Further let $<,>_{\p}$ be an $\text{Ad}(K)$-invariant scalar product on $\p$ defining a $H$-invariant Riemannian metric $\bar g$ on $H/K$. Then the orthogonal sum $$<,>=<,>_{\m}+<,>_{\p}$$ on $\m\oplus\p$ defines a $G$-invariant Riemannian metric $\hat g$ on $G/K$. It is well-known that the homogeneous projection $\pi:(G/K,\hat g)\to (G/H,g)$ is a Riemannian submersion with totally geodesic fibres see \cite{B-B-1} or \cite{Bes}. This implies that $\pi$ is a harmonic morphisms with constant dilation $\lambda\equiv 1$.

For this general situation, we have the following important examples from the special unitary, the special orthogonal and the quaternionic unitary groups.  Here the groups are equipped with their standard biinvariant Riemannian metrics induced by their Killing forms.
$$
\SU {n_1+\cdots +n_k}\to \SU{n_1+\cdots +n_k}/
\text{\bf S}(\U{n_1}\times\cdots\times\U{n_k}),
$$
$$
\SO {n_1+\cdots +n_k}\to \SO {n_1+\cdots +n_k}/
\SO{n_1}\times\cdots\times\SO{n_k},
$$
$$
\Sp {n_1+\cdots +n_k}\to \Sp {n_1+\cdots +n_k}/
\Sp{n_1}\times\cdots\times\Sp{n_k}.
$$

By applying Corollary~\ref{prop:lift-proper-k-harmonic} to the case $\SO {4}\to \SO {4}/ \SO{3}=\s^3$, we deduce that the proper biharmonic functions on $\s ^3$, described in Example~\ref{example-proper-biharmonic-S^3}, lift to proper biharmonic functions on $\SO{4}$.
\end{example}

\section{The Riemannian Lie group $\GLC n$}

Let $G$ be a Lie group with Lie algebra $\g$ of left-invariant
vector fields on $G$.  Then a Euclidean scalar product $g$ on $\g$
induces a left-invariant Riemannian metric on the
group $G$ and turns it into a homogeneous Riemannian manifold. If $Z$ is a
left-invariant vector field on $G$ and $f,h:U\to\cn$ are two
complex-valued functions defined locally on $G$ then the first and
second order derivatives satisfy
\begin{equation}\label{eq:derivativeZ}
Z(f)(p)=\frac {d}{ds}[f(p\cdot\exp(sZ))]\big|_{s=0},
\end{equation}
\begin{equation}\label{eq:derivativeZZ}
Z^2(f)(p)=\frac {d^2}{ds^2}[f(p\cdot\exp(sZ))]\big|_{s=0}.
\end{equation}

Further, assume that $G$ is a subgroup of the complex general linear
group $\GLC n$ equipped with its standard Riemannian metric.  This
is induced by the Euclidean scalar product on the Lie algebra $\glc n$ given by
$$
g(Z,W)=\Re\trace ZW^*.
$$
Employing the Koszul formula for the Levi-Civita connection $\nabla$
on $\GLC n$, we see that
\begin{eqnarray*}
g(\nab ZZ,W)&=&g([W,Z],Z)\\
&=&\Re\trace (WZ-ZW)Z^t\\
&=&\Re\trace W(ZZ^t-Z^tZ)^t\\
&=&g([Z,Z^t],W).
\end{eqnarray*}
Let $[Z,Z^t]_\g$ be the orthogonal projection of the bracket
$[Z,Z^t]$ onto the subalgebra $\g$ of $\glc n$.  Then the above calculations shows
that $$\nab ZZ=[Z,Z^t]_\g.$$
This implies that the tension field $\tau(f)$ and the conformality operator
$\kappa(f,h)$ are given by
\begin{equation}\label{tau-kappa-alie-groups}
\tau(f)=\sum_{Z\in\B}Z^2(f)-[Z,Z^t]_\g(f)
\ \ \text{and}\ \
\kappa(f,h)=\sum_{Z\in\B}Z(f)Z(h),
\end{equation}
where $\B$ is any orthonormal basis for the Lie algebra $\g$.

\begin{remark}
For $1\le i,j\le n$ we shall denote by
$E_{ij}$  the element of $\glr n$ satisfying
$$(E_{ij})_{kl}=\delta_{ik}\delta_{jl}$$ and by $D_t$ the diagonal
matrices $$D_t=E_{tt}.$$ For $1\le r<s\le n$ let $X_{rs}$ and
$Y_{rs}$ be the matrices satisfying
$$X_{rs}=\frac 1{\sqrt 2}(E_{rs}+E_{sr}),\ \ Y_{rs}=\frac
1{\sqrt 2}(E_{rs}-E_{sr}).$$
\end{remark}

\section{The special unitary group $\SU n$}\label{section-SU(n)}

In this section we construct proper biharmonic functions on open subsets of the special unitary group $\SU n$.  They are quotients of first order homogeneous polynomials in the matrix coefficients of the standard $n$-dimensional representation $\pi_1$ of $\SU n$. The unitary group $\U n$ is the compact subgroup of $\GLC n$ given by
$$
\U n=\{z\in\GLC{n}|\ z\cdot z^*=I_n\},
$$
with its standard matrix representation
$$
z=\begin{bmatrix}
z_{11} & z_{12} & \cdots & z_{1n}\\
z_{21} & z_{22} & \cdots & z_{2n}\\
\vdots & \vdots & \ddots & \vdots \\
z_{n1} & z_{n1} & \cdots & z_{nn}
\end{bmatrix}.
$$
The circle group $\s^1=\{e^{i\theta}\in\cn | \ \theta\in\rn\}$ acts on the unitary group $\U n$ by multiplication
$$
(e^{i\theta},z)\mapsto e^{i\theta} z
$$
and the orbit space of this action is the special unitary group
$$
\SU n=\{ z\in\U n|\ \det z = 1\}.
$$
The natural projection $\pi:\U n\to\SU n$ is a harmonic morphism with contant dilation $\lambda\equiv 1$.

The Lie algebra $\u n$ of the unitary group $\U n$ satisfies
$$
\u{n}=\{Z\in\cn^{n\times n}|\ Z+Z^*=0\}
$$
and for this we have the canonical orthonormal basis
$$
\{Y_{rs}, iX_{rs}|\ 1\le r<s\le n\}\cup\{iD_t|\ t=1,\dots ,n\}.
$$
Now, by means of a direct computation based on \eqref{eq:derivativeZ}, \eqref{eq:derivativeZZ} and \eqref{tau-kappa-alie-groups}, we have the following basic result, see \cite{Gud-Sak-1}.

Note that from now on we shall use latin indices for rows and greek indices for columns.

\begin{lemma}\label{lemm:complex}
For $1\le j,\alpha\le n$, let $z_{j\alpha}:\U n\to\cn$ be the complex-valued matrix coefficients of the standard representation of $\U n$ given by
$$
z_{j\alpha}:z\mapsto e_j\cdot z\cdot e_\alpha^t,
$$
where $\{e_1,\dots ,e_n\}$ is the canonical basis for $\cn^n$. Then the following relations hold
\begin{equation}\label{lemma5-1}
\tau(z_{j\alpha})= -n\cdot z_{j\alpha}\ \ \text{and}\ \ \kappa(z_{j\alpha},z_{k\beta})= -z_{k\alpha}z_{j\beta}.
\end{equation}
\end{lemma}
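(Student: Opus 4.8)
The plan is to reduce everything to matrix algebra in $\glc n$ via the derivative formulas \eqref{eq:derivativeZ}, \eqref{eq:derivativeZZ} together with \eqref{tau-kappa-alie-groups}. First I would note that, writing a generic point of $\U n$ again as $z$, we have $z_{j\alpha}(z\exp(sZ))=(z\exp(sZ))_{j\alpha}$, so differentiating at $s=0$ gives $Z(z_{j\alpha})=(zZ)_{j\alpha}=\sum_\gamma z_{j\gamma}Z_{\gamma\alpha}$ and $Z^2(z_{j\alpha})=(zZ^2)_{j\alpha}$ for every left-invariant field $Z$. Next I would check that each element of the orthonormal basis $\B$ commutes with its own transpose: $Y_{rs}$ is real and antisymmetric, while $iX_{rs}$ and $iD_t$ are transpose-invariant. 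Hence $[Z,Z^t]=0$ for all $Z\in\B$, so a fortiori $[Z,Z^t]_\g=0$, and \eqref{tau-kappa-alie-groups} collapses to $\tau(f)=\sum_{Z\in\B}Z^2(f)$.

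For the tension field this leaves the task of computing the Casimir-type matrix $C:=\sum_{Z\in\B}Z^2\in\glc n$, since then $\tau(z_{j\alpha})=(zC)_{j\alpha}$. Using $E_{ab}E_{cd}=\delta_{bc}E_{ad}$ one finds $Y_{rs}^2=-\tfrac12(E_{rr}+E_{ss})=(iX_{rs})^2$ for $r<s$ and $(iD_t)^2=-E_{tt}$; summing and using that each diagonal unit $E_{aa}$ appears in exactly $n-1$ of the index pairs $\{r,s\}$ gives $\sum_{r<s}(E_{rr}+E_{ss})=(n-1)I_n$ and therefore $C=-(n-1)I_n-I_n=-n\,I_n$, whence $\tau(z_{j\alpha})=-n\,z_{j\alpha}$.

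For the conformality operator I would substitute $Z(z_{j\alpha})=\sum_\gamma z_{j\gamma}Z_{\gamma\alpha}$ into $\kappa(z_{j\alpha},z_{k\beta})=\sum_{Z\in\B}Z(z_{j\alpha})Z(z_{k\beta})$, reducing it to $\sum_{\gamma,\delta}z_{j\gamma}z_{k\delta}\,\Theta_{\gamma\alpha\delta\beta}$ where $\Theta_{\gamma\alpha\delta\beta}:=\sum_{Z\in\B}Z_{\gamma\alpha}Z_{\delta\beta}$, and then compute $\Theta$ family by family. I expect the $Y_{rs}$- and $iX_{rs}$-terms to combine — after unfolding the sum over $r<s$ into a sum over all ordered pairs $r\neq s$, using $Y_{sr}=-Y_{rs}$ and $X_{sr}=X_{rs}$ — into $-[\gamma\neq\alpha]\,\delta_{\gamma\beta}\delta_{\alpha\delta}$, with the diagonal terms $iD_t$ contributing $-\delta_{\gamma\alpha}\delta_{\gamma\beta}\delta_{\gamma\delta}$, which supplies exactly the missing case $\gamma=\alpha$. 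Altogether $\Theta_{\gamma\alpha\delta\beta}=-\delta_{\gamma\beta}\delta_{\alpha\delta}$, so $\kappa(z_{j\alpha},z_{k\beta})=-z_{j\beta}z_{k\alpha}=-z_{k\alpha}z_{j\beta}$, as claimed in \eqref{lemma5-1}.

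The proof is entirely computational, and the only real obstacle is the bookkeeping in assembling $C$ and $\Theta$: keeping straight the $i^2=-1$ factors coming from $iX_{rs}$ and $iD_t$, the sign changes produced by the antisymmetry of $Y_{rs}$ versus the symmetry of $X_{rs}$ when one passes from $r<s$ to all ordered pairs, and the multiplicities of the diagonal units $E_{aa}$. One way to limit the risk of error is to observe that $C_{ac}=\sum_b\Theta_{abbc}$, so that it suffices to establish the single tensor identity $\Theta_{\gamma\alpha\delta\beta}=-\delta_{\gamma\beta}\delta_{\alpha\delta}$ and then recover $C=-n\,I_n$ by contraction, obtaining both formulas at once.
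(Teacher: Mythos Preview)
Your proposal is correct and follows exactly the approach the paper indicates: a direct computation based on \eqref{eq:derivativeZ}, \eqref{eq:derivativeZZ} and \eqref{tau-kappa-alie-groups}, which the paper does not spell out but defers to \cite{Gud-Sak-1}. Your organization via the Casimir matrix $C=\sum_{Z\in\B}Z^2$ and the tensor $\Theta_{\gamma\alpha\delta\beta}=\sum_{Z\in\B}Z_{\gamma\alpha}Z_{\delta\beta}$, together with the consistency check $C_{ac}=\sum_b\Theta_{abbc}$, is a clean way to package that computation.
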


We can now state our first construction of complex-valued biharmonic functions.

\begin{theorem}\label{teo-quotient-linear}
Let $p,q\in\cn^n$ be linearly independent and $P,Q:\U n\to\cn$ be the complex-valued functions on the unitary group given by
$$
P(z)=\sum_{j}p_jz_{j\alpha}\ \ \text{and}\ \ Q(z)=\sum_{k}q_kz_{k\beta}.
$$
Further, let the rational function $f(z)=P(z)/Q(z)$ be defined on the open and dense subset $W_Q=\{z\in\U n|\ Q(z)\neq 0\}$ of the unitary group.  Then the following is true.
\begin{itemize}
\item[(a)] The function $f$ is harmonic if and only if $\alpha=\beta$.
\item[(b)] The function $f$ is proper biharmonic if and only if $\alpha\neq\beta$.
\end{itemize}
The corresponding statements hold for the function induced on $\SU n$.
\end{theorem}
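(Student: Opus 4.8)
The plan is to prove statements (a) and (b) on the unitary group $\U n$ and then transfer them to $\SU n$. Although $P$ and $Q$ are not invariant under the $\s^1$-action $(e^{i\theta},z)\mapsto e^{i\theta}z$, the quotient satisfies $f(e^{i\theta}z)=f(z)$, so $f$ descends to a function $\bar f$, defined on an open dense subset of $\SU n$, with $f=\bar f\circ\pi$ for the harmonic morphism $\pi:\U n\to\SU n$ of constant dilation $\lambda\equiv 1$; by Corollary~\ref{prop:lift-proper-k-harmonic} the function $\bar f$ is (proper) $r$-harmonic exactly when $f$ is, which gives the final assertion once the $\U n$ case is settled. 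On $\U n$ I would introduce, for $a\in\cn^n$ and a column index $\gamma$, the function $a^{(\gamma)}=\sum_j a_j z_{j\gamma}$, so that $P=p^{(\alpha)}$ and $Q=q^{(\beta)}$. From Lemma~\ref{lemm:complex}, linearity of $\tau$ and bilinearity of $\kappa$ one gets at once
$$
\tau(a^{(\gamma)})=-n\,a^{(\gamma)},\qquad \kappa(a^{(\gamma)},b^{(\delta)})=-\,b^{(\gamma)}a^{(\delta)},
$$
that is, in this pairing the two weight vectors get swapped between the two columns.

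The first step is to compute $\tau(f)$. Writing $P=fQ$, the product rule $\tau(fQ)=\tau(f)Q+2\kappa(f,Q)+f\tau(Q)$ together with $\tau(P)=-nP$ and $\tau(Q)=-nQ$ yields $\tau(f)\,Q=-2\kappa(f,Q)$. Using $\nabla(P/Q)=Q^{-1}\nabla P-PQ^{-2}\nabla Q$ and the identities $\kappa(P,Q)=-q^{(\alpha)}p^{(\beta)}$, $\kappa(Q,Q)=-Q^2$, this gives
$$
\tau(f)=-2\Bigl(f-\frac{q^{(\alpha)}p^{(\beta)}}{Q^{2}}\Bigr).
$$
Set $g=q^{(\alpha)}p^{(\beta)}/Q^{2}$, so $\tau(f)=-2(f-g)$. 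In particular, if $\alpha=\beta$ then $g=q^{(\alpha)}p^{(\alpha)}/(q^{(\alpha)})^{2}=P/Q=f$, so $f$ is harmonic.

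Next I would show that $f$ is biharmonic for all $\alpha,\beta$, the crucial point being that $f-g$ is itself harmonic. Factor $g=u\,v$ with $u=q^{(\alpha)}/q^{(\beta)}$ and $v=p^{(\beta)}/q^{(\beta)}$. Applying the formula of the previous step to these two quotients (one has the same column for numerator and denominator, the other the same weight vector) gives $\tau(u)=\tau(v)=0$. Hence $\tau(g)=\tau(uv)=2\kappa(u,v)$, and a direct expansion of $\kappa(u,v)$ using $\nabla(A/B)=B^{-1}\nabla A-AB^{-2}\nabla B$ together with the $\kappa$-identity above collapses to $\kappa(u,v)=g-f$. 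Therefore $\tau(g)=-2(f-g)=\tau(f)$, so $\tau(f-g)=0$ and
$$
\tau^{2}(f)=\tau\bigl(-2(f-g)\bigr)=-2\,\tau(f-g)=0 .
$$

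It remains to decide when $\tau(f)$ vanishes identically. By the first step, $f$ is harmonic if and only if $R:=PQ-q^{(\alpha)}p^{(\beta)}\equiv 0$, and expanding,
$$
R=\sum_{j<k}(p_jq_k-p_kq_j)\,(z_{j\alpha}z_{k\beta}-z_{k\alpha}z_{j\beta}).
$$
If $\alpha=\beta$ every $2\times2$ minor here vanishes, so $R\equiv 0$ and $f$ is harmonic, hence not proper biharmonic. If $\alpha\ne\beta$, the coefficients $p_jq_k-p_kq_j$ are not all zero because $p,q$ are linearly independent, and the minors $z_{j\alpha}z_{k\beta}-z_{k\alpha}z_{j\beta}$, $j<k$, are the matrix coefficients of the irreducible representation $\Lambda^{2}\pi_1$ of $\U n$ relative to the column pair $(\alpha,\beta)$, hence linearly independent; thus $R\not\equiv 0$ and, being biharmonic, $f$ is proper biharmonic. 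Combined with the reduction of the first paragraph, this proves (a) and (b) on both $\U n$ and $\SU n$. The main obstacle is the middle step: spotting that $f-g$ is harmonic (which is exactly what forces $\tau^{2}(f)=0$) and carrying out the $\kappa(u,v)$ computation without sign or bookkeeping errors — the factorization $g=uv$ into two harmonic quotients is the device that keeps this under control, while the non-vanishing in the last step rests on the standard linear independence of the matrix coefficients of an irreducible representation.
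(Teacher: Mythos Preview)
Your proof is correct. The computation of $\tau(f)=-2(f-g)$ is equivalent to the paper's formula $\tau(f)=-2f-2\kappa(P,Q)Q^{-2}$, and your argument for the ``only if'' direction in (a), via the expansion $R=\sum_{j<k}(p_jq_k-p_kq_j)(z_{j\alpha}z_{k\beta}-z_{k\alpha}z_{j\beta})$ and linear independence of the minors, makes explicit what the paper leaves as a one-line assertion.

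The genuine difference is in the biharmonic step. The paper proceeds by brute force: it computes $\tau(\kappa(P,Q))$, $\kappa(\kappa(P,Q),Q^{-2})$ and $\tau(Q^{-2})$ separately (formulas \eqref{varie-th-U(n)}) and then expands $\tau^2(f)$ term by term, watching the coefficients cancel. You instead factor $g=uv$ with $u=q^{(\alpha)}/q^{(\beta)}$ and $v=p^{(\beta)}/q^{(\beta)}$, observe that both factors are harmonic (one has equal columns, the other equal weight vectors), and reduce everything to the single identity $\kappa(u,v)=g-f$, which gives $\tau(g)=\tau(f)$ and hence $\tau^2(f)=0$ immediately. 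This is more conceptual: it isolates \emph{why} the cancellation happens rather than merely verifying it, and it avoids computing $\tau(Q^{-2})$ altogether. The paper's approach, on the other hand, is more mechanical and transfers directly to the $\SO n$ and $\Sp n$ cases treated later, where the analogues of your harmonic factorization are less clean (for $\SO n$ the extra $\delta$-terms in $\kappa$ spoil it). So your argument is tidier here, while the paper's is more uniform across the three classical groups.
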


\begin{proof}
It is easily seen that for a general quotient $f=P/Q$ we have
\begin{equation}\label{tau-quotient}
Q^3\tau(f)=Q^2\tau(P)-2Q\kappa(P,Q)+2P\kappa(Q,Q)-PQ\tau(Q).
\end{equation}
It follows from Lemma~\ref{lemm:complex} that $P,Q:\U n\to \cn$ are eigenfunctions of the Laplace-Beltrami operator $\tau$ and that $\kappa(Q,Q)=-Q^2$.  This implies
\begin{equation}\label{eq:tau-f-un}
\tau(f)=-2f-2\kappa(P,Q)Q^{-2}.
\end{equation}
A simple calculation shows that
$$
\kappa(P,Q)=-\sum_{j, k} p_j\, q_k\, z_{j\beta}\, z_{k\alpha},
$$
and equation \eqref{eq:tau-f-un} tells us that $f$ is harmonic if and only if $\kappa(P,Q)=-P Q$.
Since $p,q\in\cn^n$ are linearly independent, this holds if and only if $\alpha=\beta$.

If we now assume that $\alpha\neq\beta$ then, again using Lemma~\ref{lemm:complex}, we yield
\begin{eqnarray}\label{varie-th-U(n)}\nonumber
\tau(\kappa(P,Q))&=&2PQ-2n\kappa(P,Q), \\
\kappa(\kappa(P,Q),Q^{-2})&=&4\kappa(P,Q)Q^{-2}, \\ \nonumber
\tau(Q^{-2})&=&2(n-3)Q^{-2}.\\ \nonumber
\end{eqnarray}

We prove statement (b) by computing the bitension field $\tau^2(f)$ using \eqref{varie-th-U(n)}.
\begin{eqnarray*}
\tau^2(f)&=&-2\tau(f)-2\tau(\kappa(P,Q)Q^{-2})\\
&=&4f+4\kappa(P,Q)Q^{-2}-2\tau(\kappa(P,Q)Q^{-2})\\
&=&4f+4\kappa(P,Q)Q^{-2}-2\tau(\kappa(P,Q))Q^{-2}\\
& &\qquad \qquad -4\kappa(\kappa(P,Q),Q^{-2})-2\kappa(P,Q)\tau(Q^{-2})\\
&=&4f-4f+(4+4n-4n-16+12)\kappa(P,Q)Q^{-2}\\
&=&0.
\end{eqnarray*}
The last statement of the theorem is a simple consequence of the fact that the function $f$ is invariant under the action of $\s^1$ on $\U n$.
\end{proof}

\begin{example}\label{example-from-S^3}
As a special case of Theorem~\ref{teo-quotient-linear}, let us assume that $n=2$ and consider the proper biharmonic function $f$ defined locally on $\U 2$ by
\begin{equation}\label{example}\nonumber
f(z)= \frac{z_{11}}{z_{22}}.
\end{equation}
This function is invariant under multiplication by $e^{i \theta}$ on ${\U 2}$ so it induces a proper biharmonic function defined locally on ${\SU 2}$.  It is well-known that $\SU 2$ is, up to a constant multiple of the metric, isometric to $\s^3$ as the Lie group of unit quaternions via
$$
(z,w)\in\s^3\mapsto
\begin{bmatrix}z & w\\ -\bar w & \bar z\end{bmatrix}
=\begin{bmatrix}x_1+ix_2 & x_3+ix_4\\ -x_3+ix_4 & x_1-ix_2\end{bmatrix}\in\SU 2.
$$
If we write the function $f$ in terms of $(z,w)=(x_1+ix_2,x_3+ix_4)$ we get
$$
f(x_1,x_2,x_3,x_5)=f(z,w)=\frac z{\bar z}=\frac{x_1+ix_2}{x_1-ix_2}
=\frac{x_1^2-x_2^2}{x_1^2+x_2^2}+i\frac{2x_1x_2}{x_1^2+x_2^2}.$$
It should be noted that $f$ is exactly the function which we obtained in Example \ref{example-proper-biharmonic-S^3} by choosing $p=(1,i,0,0)$ and $q=(1,-i,0,0)$.
\end{example}

\section{The special orthogonal group $\SO n$}\label{section-SO(n)}

In this section we construct proper biharmonic functions on open subsets of the special orthogonal group $\SO n$. They are quotients of first order homogeneous polynomials in the matrix coefficients of its standard representation. The Lie group $\SO n$ is the subgroup of $\GLR n$ given by
$$
\SO{n}=\{x\in\GLR{n}\ |\ x\cdot x^t=I_n,\ \det x =1\}.
$$
Its Lie algebra $\so n$ is the set of skew-symmetric matrices
$$
\so n=\{X\in\glr n |\ X+X^t=0\}
$$
and for this we have the canonical orthonormal basis
$$
\{Y_{rs}|\ 1\le r<s\le n\}.
$$

For the special orthogonal group we have the following basic result, see \cite{Gud-Sak-1}.

\begin{lemma}\label{lemm:real}
For $1\le j,\alpha\le n$, let $x_{j\alpha}:\SO n\to\rn$ be the real-valued matrix coefficients of the standard representation of $\SO n$ given by
$$
x_{j\alpha}:x\mapsto e_j\cdot x\cdot e_\alpha^t,
$$
where $\{e_1,\dots ,e_n\}$ is the canonical basis for $\rn^n$. Then the following relations hold
$$
\tau(x_{j\alpha})=-\frac {(n-1)}2\cdot x_{j\alpha},
$$
$$
\kappa(x_{j\alpha},x_{k\beta})=-\frac 12\cdot (x_{k\alpha}x_{j\beta}-\delta_{kj}\delta_{\alpha\beta}).
$$
\end{lemma}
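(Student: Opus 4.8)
The strategy is to apply the general formulas for the tension field and the conformality operator on a subgroup of $\GLC n$, namely the identities in \eqref{tau-kappa-alie-groups}, to the specific matrix coefficients $x_{j\alpha}$ of $\SO n$. Everything reduces to computing the action of the basis vectors $Y_{rs}$ (which form an orthonormal basis for $\so n$) on the linear functions $x_{j\alpha}$, and then summing the squares. The key point that makes this manageable is that the $x_{j\alpha}$ are linear in the matrix entries, so by \eqref{eq:derivativeZ} and \eqref{eq:derivativeZZ} the first derivative $Y_{rs}(x_{j\alpha})$ is again a matrix coefficient (up to sign), and the second derivative $Y_{rs}^2(x_{j\alpha})$ can be read off by differentiating $x\mapsto x\exp(sY_{rs})$ twice at $s=0$.

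First I would record that, for a left-invariant vector field $Z$ and the linear function $x_{j\alpha}(x)=e_j x e_\alpha^t$, one has $Z(x_{j\alpha})(x)=e_j x Z e_\alpha^t=(xZ)_{j\alpha}$ and $Z^2(x_{j\alpha})(x)=e_j x Z^2 e_\alpha^t=(xZ^2)_{j\alpha}$. Taking $Z=Y_{rs}=\tfrac1{\sqrt2}(E_{rs}-E_{sr})$, a short computation gives $Y_{rs}e_\alpha^t=\tfrac1{\sqrt2}(\delta_{s\alpha}e_r^t-\delta_{r\alpha}e_s^t)$ and $Y_{rs}^2e_\alpha^t=-\tfrac12(\delta_{r\alpha}e_r^t+\delta_{s\alpha}e_s^t)$, so that
\[
Y_{rs}(x_{j\alpha})=\tfrac1{\sqrt2}\bigl(\delta_{s\alpha}x_{jr}-\delta_{r\alpha}x_{js}\bigr),\qquad
Y_{rs}^2(x_{j\alpha})=-\tfrac12\bigl(\delta_{r\alpha}x_{jr}+\delta_{s\alpha}x_{js}\bigr).
\]
Next, by \eqref{tau-kappa-alie-groups}, $\tau(x_{j\alpha})=\sum_{1\le r<s\le n}Y_{rs}^2(x_{j\alpha})$, since the Levi-Civita correction term $[Y_{rs},Y_{rs}^t]_{\so n}$ vanishes (the metric is biinvariant, so $\nabla_ZZ=0$ for left-invariant $Z$; equivalently one checks $[Y_{rs},Y_{rs}^t]=0$ directly). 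Summing $-\tfrac12(\delta_{r\alpha}x_{jr}+\delta_{s\alpha}x_{js})$ over all pairs $r<s$: for fixed $\alpha$, the index $\alpha$ appears paired with each of the other $n-1$ indices, contributing $-\tfrac12 x_{j\alpha}$ each time, which yields $\tau(x_{j\alpha})=-\tfrac{n-1}2 x_{j\alpha}$. For the conformality operator, $\kappa(x_{j\alpha},x_{k\beta})=\sum_{r<s}Y_{rs}(x_{j\alpha})Y_{rs}(x_{k\beta})=\tfrac12\sum_{r<s}(\delta_{s\alpha}x_{jr}-\delta_{r\alpha}x_{js})(\delta_{s\beta}x_{kr}-\delta_{r\beta}x_{ks})$; expanding the product and using the orthogonality relations $\sum_r x_{jr}x_{kr}=\delta_{jk}$ (the rows of $x\in\SO n$ are orthonormal) to collapse the sums gives $\kappa(x_{j\alpha},x_{k\beta})=-\tfrac12(x_{k\alpha}x_{j\beta}-\delta_{kj}\delta_{\alpha\beta})$, with the $\delta_{kj}\delta_{\alpha\beta}$ term coming precisely from the case $\alpha=\beta$ where the diagonal constraint $\sum_r x_{jr}^2=1$ enters.

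The main obstacle is purely bookkeeping: carefully carrying out the double sum for $\kappa$, keeping track of which of the four cross-terms survive depending on whether $\alpha=\beta$ or $\alpha\neq\beta$, and correctly invoking the quadratic relations $xx^t=I_n$ satisfied by the matrix coefficients (which hold as functions on $\SO n$, hence may be freely used in simplifying expressions in the $x_{j\alpha}$). No conceptual difficulty arises; one just has to be meticulous about indices and about the normalization factor $\tfrac1{\sqrt2}$ in the definition of $Y_{rs}$, which is exactly what produces the overall $\tfrac12$ in both formulas.
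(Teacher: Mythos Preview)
Your proof is correct. The paper itself does not give an argument for this lemma but simply refers to \cite{Gud-Sak-1}; the direct computation you carry out, using \eqref{eq:derivativeZ}, \eqref{eq:derivativeZZ} and \eqref{tau-kappa-alie-groups} together with the orthonormal basis $\{Y_{rs}\}$ of $\so n$, is precisely the approach the paper indicates for the analogous Lemma~\ref{lemm:complex} and is the natural way to establish the result.
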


The next result describes our construction of complex-valued biharmonic functions from open subsets of $\SO n$.

\begin{theorem}\label{theorem-orthogonal-group}
Let $p,q\in\cn^n$ be linearly independent and $P,Q:\SO n\to\cn$ be the complex-valued functions on the special orthogonal group given by
$$
P(x)=\sum_{j}p_jx_{j\alpha},\ \ Q(x)=\sum_{k}q_kx_{k\beta}.
$$
Further, let the rational function $f(x)=P(x)/Q(x)$ be defined on the open and dense subset $W_Q=\{x\in\SO n|\ Q(x)\neq 0\}$ of the special orthogonal group.  The function $f$ is harmonic if and only if $\alpha=\beta$, $(q,q)=0$ and $(p,q)=0$. It is proper biharmonic if and only if
\begin{enumerate}
\item[(a)] $\alpha=\beta$, $(q,q)=0$, $(p,q)\neq 0$ and $n=4$, or
\item[(b)] $\alpha\neq\beta$, $(q,q)=0$ and $(p,q)=0$.
\end{enumerate}
\end{theorem}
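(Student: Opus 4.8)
The plan is to push everything through the quotient formula \eqref{tau-quotient} and the eigendata of Lemma~\ref{lemm:real}, exactly as in the proof of Theorem~\ref{teo-quotient-linear}. Write $(p,q)=\sum_jp_jq_j$ for the complex-bilinear extension of the Euclidean product of $\rn^n$, so that linearity of $\tau$ and $\kappa$ gives $\tau(P)=-\tfrac{n-1}{2}P$, $\tau(Q)=-\tfrac{n-1}{2}Q$, $\kappa(Q,Q)=-\tfrac12(Q^2-(q,q))$ and
$$
\kappa(P,Q)=-\tfrac12\Bigl(R-\delta_{\alpha\beta}(p,q)\Bigr),\qquad R:=\sum_{j,k}p_jq_k\,x_{j\beta}x_{k\alpha},
$$
noting that $R=PQ$ precisely when $\alpha=\beta$. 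Substituting into \eqref{tau-quotient} I expect the compact formula
$$
Q^3\tau(f)=Q\,(R-PQ)-\delta_{\alpha\beta}(p,q)\,Q+(q,q)\,P .
$$
For $\alpha=\beta$ the right-hand side is $\sum_j\bigl((q,q)p_j-(p,q)q_j\bigr)x_{j\alpha}$, which vanishes identically on $\SO n$ iff $(q,q)p=(p,q)q$, i.e.\ (since $p,q$ are independent) iff $(q,q)=(p,q)=0$. For $\alpha\ne\beta$ one shows that $Q(R-PQ)+(q,q)P$ is never identically zero for independent $p,q$ — most cleanly by evaluating at orthogonal matrices with prescribed $\alpha$-th and $\beta$-th columns — so $f$ is harmonic exactly in the stated case.

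For the biharmonic part I would compute $\tau^2(f)$ by iterating $\tau(gh)=\tau(g)h+2\kappa(g,h)+g\tau(h)$, $\kappa(g,Q^{-m})=-mQ^{-m-1}\kappa(g,Q)$ and the eigendata above. Two identities carry the computation: $\tau(Q^{-2})=(n-4)Q^{-2}+3(q,q)Q^{-4}$, and — the analogue of the relations \eqref{varie-th-U(n)}, obtained the same way from Lemma~\ref{lemm:real} and reflecting that $R-PQ$ lies among the matrix coefficients of $\Lambda^2$ of the standard representation — the eigenfunction identity $\tau(R-PQ)=-(n-2)(R-PQ)$ valid when $\alpha\ne\beta$. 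I expect the computation to collapse, for $\alpha=\beta$, to the factored form
$$
Q^5\tau^2(f)=\bigl((n-4)Q^2+6(q,q)\bigr)\bigl((q,q)P-(p,q)Q\bigr),
$$
and, for $\alpha\ne\beta$, to
$$
Q^5\tau^2(f)=-2(p,q)\,\widetilde Q\,Q^2+(q,q)\bigl((n-8)PQ^2+6RQ+6(q,q)P\bigr),\qquad \widetilde Q:=\sum_kq_kx_{k\alpha}.
$$

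It then remains to read off when these vanish on $\SO n$. In the case $\alpha=\beta$ the coordinate ring of $\SO n$ is an integral domain, so $\tau^2(f)\equiv0$ iff one of the two factors vanishes identically; the factor $(n-4)Q^2+6(q,q)$ does so exactly when $n=4$ and $(q,q)=0$, while $(q,q)P-(p,q)Q$ does so exactly when $(q,q)=(p,q)=0$ (again by linear independence of $\{x_{j\alpha}\}_j$ and of $p,q$). Removing the harmonic solutions leaves precisely alternative~(a). In the case $\alpha\ne\beta$ the function $f$ is never harmonic, so I only have to decide when the second numerator above vanishes on $\SO n$; substituting $x_{j\alpha}\mapsto u_j$, $x_{k\beta}\mapsto v_k$ for an orthonormal pair $u,v\in\s^{n-1}$ turns this into a polynomial identity on the sphere, and separating the coefficients of $(p,u)$ and $(q,u)$ forces, for every such $v$, the vector $(q,q)\bigl[(n-8)(q,v)^2+6(q,q)\bigr]p+\bigl(-2(p,q)(q,v)^2+6(q,q)(p,v)(q,v)\bigr)q$ to be proportional to $v$; since it lies in $\span\{p,q\}$ it must vanish, and independence of $p,q$ then forces $(q,q)=0$ and $(p,q)=0$ — which is alternative~(b), these conditions being manifestly sufficient as well. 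The low-rank exception $n=2$ with $\alpha\ne\beta$, where not all column frames are realizable, is handled directly: there condition~(b) is vacuous, since $(q,q)=(p,q)=0$ already forces $p\parallel q$.

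The routine but lengthy part is the expansion of $\tau^2(f)$ and keeping track of the coefficients. The real obstacle is the last step for $\alpha\ne\beta$: in contrast to the pleasant factorization available when $\alpha=\beta$, the numerator of $\tau^2(f)$ does not factor, and one cannot argue by comparing coefficients of monomials in the $x_{j\alpha}$, since these satisfy the orthonormality relations $\sum_kx_{k\alpha}x_{k\beta}=\delta_{\alpha\beta}$. The frame-evaluation argument is what circumvents this, and the point where linear independence of $p$ and $q$ genuinely re-enters is the observation that a vector of $\span\{p,q\}$ proportional to every $v$ must be zero.
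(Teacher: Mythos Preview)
Your proposal is correct and follows the same overall route as the paper: reduce $\tau(f)$ via \eqref{tau-quotient} to the eigendata of Lemma~\ref{lemm:real}, then iterate to $\tau^2(f)$ and read off the vanishing conditions case by case. Two points are worth flagging.

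First, your factorization
$$Q^5\tau^2(f)=\bigl((n-4)Q^2+6(q,q)\bigr)\bigl((q,q)P-(p,q)Q\bigr)$$
in the case $\alpha=\beta$ is a genuine improvement over the paper's argument, which leaves the four terms of \eqref{final-step-case1} unfactored and then appeals to an ``inspection'' based on homogeneity. Your integral-domain reasoning is cleaner and more transparent. Likewise, your recognition that $R-PQ$ is a $\Lambda^2$-coefficient with eigenvalue $-(n-2)$ is a tidy way to organize the $\alpha\neq\beta$ computation; the paper instead computes $\tau(\kappa(P,Q))$, $\kappa(\kappa(P,Q),Q^{-2})$, etc.\ directly. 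For the non-vanishing step when $\alpha\ne\beta$, the paper argues by isolating the unique degree-one term $6(q,q)^2P$ in the numerator, whereas your frame-evaluation argument (sending the $\alpha$- and $\beta$-columns to an orthonormal pair $u,v$) is more explicit and better handles the orthogonality relations among the $x_{j\gamma}$.

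Second, a minor slip: in your $\alpha\ne\beta$ formula the coefficient of $RQ$ should be $3$, not $6$. Indeed $2\kappa(P,Q^{-3})=-6Q^{-4}\kappa(P,Q)=3RQ^{-4}$, matching the paper's $-6\kappa(P,Q)Q^{-4}$ with $\kappa(P,Q)=-\tfrac12 R$. This does not affect your conclusion, since after forcing $(q,q)=0$ from the $p$-coefficient you are left with $-2(p,q)(q,v)^2=0$ regardless.
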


\begin{proof}
First we note that $P,Q:\SO n\to \cn$ are eigenfunctions of the Laplace-Beltrami
operator $\tau$ and so the general relation \eqref{tau-quotient} simplifies to
\begin{equation}\label{simplified-general-equation}\nonumber
Q^3\tau(f)=2P\kappa(Q,Q)-2Q\kappa(P,Q).
\end{equation}
We first consider the case when $\alpha=\beta$.  Then Lemma~\ref{lemm:real} gives
$$
\kappa(Q,Q)=\frac 12((q,q)-Q^2)\ \ \text{and}\ \ \kappa(P,Q)=\frac 12((p,q)-PQ),
$$
hence
$$
\tau(f)=(q,q)PQ^{-3}-(p,q)Q^{-2}.
$$
This implies that if $\alpha=\beta$ then $f$ is harmonic if and only if $(q,q)=0$ and $(p,q)=0$.  A simple calculation shows that
\begin{eqnarray}\label{eq:tauq-2q-3}\nonumber
\tau(Q^{-2})&=&(n-4)Q^{-2}+3(q,q)Q^{-4},\\
\tau(Q^{-3})&=&\frac 32(n-5)Q^{-3}+6(q,q)Q^{-5}\\ \nonumber
\kappa(P,Q^{-3})&=&\frac 32PQ^{-3}-\frac 32(p,q)Q^{-4}.
\end{eqnarray}
This leads us to
\begin{eqnarray}\label{final-step-case1}
\tau^2(f)&=&(q,q)\tau(PQ^{-3})-(p,q)\tau(Q^{-2})\\ \nonumber
&=&(q,q)(\tau(P)Q^{-3}+2\kappa(P,Q^{-3})+P\tau(Q^{-3}))-(p,q)\tau(Q^{-2})\\ \nonumber
&=&-6(p,q)(q,q)Q^{-4}-(p,q)(n-4)Q^{-2}\\ \nonumber
& &\qquad\qquad +6(q,q)^2PQ^{-5}+(q,q)(n-4)PQ^{-3}.
\end{eqnarray}
An inspection of \eqref{final-step-case1}, using the fact that the right-hand side is the sum of terms which are not homogeneous, enables us to conclude that $\tau^2(f)=0$ and $\tau(f)\neq 0$ if and only if $(q,q)=0$, $(p,q)\neq 0$ and $n=4$.

Let us now assume that $\alpha\neq\beta$.  Then we apply Lemma~\ref{lemm:real} and obtain the following
\begin{eqnarray}\label{eq1:alphanotbeta}\nonumber
\kappa(Q,Q)&=&\frac 12((q,q)-Q^2), \\ \nonumber
\kappa(P,Q)&=&-\frac 12(\sum_{j}p_jx_{j\beta})(\sum_{k}q_kx_{k\alpha}),\\
\tau(\kappa(P,Q))&=&\frac 12PQ-(n-1)\kappa(P,Q),\\ \nonumber
\kappa(\kappa(P,Q),Q^{-2})&=&2\kappa(P,Q)Q^{-2}+(p,q) \, \frac{1}{2}\, (\sum_{k}q_k x_{k\alpha})\, Q^{-3},\\ \nonumber
\tau(Q^{-2})&=&(n-4)Q^{-2}. \nonumber
\end{eqnarray}
The tension field
$$
\tau(f)=(q,q)PQ^{-3}-f-2\kappa(P,Q)Q^{-2}
$$
does not vanish identically since since $p$ and $q$ are linearly independent, so $f$ is not harmonic in this case.  Now we use \eqref{eq1:alphanotbeta} and compute
\begin{eqnarray*}
& &\tau(f)+2\tau(\kappa(P,Q)Q^{-2})\\
&=&(q,q) P Q^{-3}-f-2\kappa(P,Q)Q^{-2}+2\tau(\kappa(P,Q))Q^{-2}\\
&&+2\kappa(P,Q)\tau(Q^{-2})+4\kappa(\kappa(P,Q),Q^{-2})\\
&=&(q,q) P Q^{-3} -\frac{P}{Q} -2\kappa(P,Q)Q^{-2} +2\left[\frac{1}{2} PQ- (n-1)\kappa(P,Q)\right]Q^{-2}\\
&&+2 (n-4) \kappa(P,Q) Q^{-2}+8 \kappa(P,Q) Q^{-2} +2(p,q)\, (\sum_{k}q_k x_{k\alpha})\, Q^{-3}\\
&=&(q,q) P Q^{-3} +2(p,q)\, (\sum_{k}q_k x_{k\alpha})\, Q^{-3}
\end{eqnarray*}
Taking \eqref{eq:tauq-2q-3} and \eqref{eq1:alphanotbeta} into account, it follows that
\begin{eqnarray*}\label{bitension-orthogonal-group-last}
\tau^2(f)&=&(q,q)\tau(PQ^{-3})-(q,q) P Q^{-3}-2(p,q)\, (\sum_{k}q_k x_{k\alpha})\, Q^{-3}\\
&=&(q,q)[\tau(P)Q^{-3}+2\kappa(P,Q^{-3})+P\tau(Q^{-3})]\\
& &\qquad\qquad\quad\qquad -(q,q) P Q^{-3}-2(p,q)\, (\sum_{k}q_k x_{k\alpha})\, Q^{-3}\nonumber\\
&=&(q,q)[(n-8)PQ^{-3}-6\kappa(P,Q)Q^{-4}+6(q,q)PQ^{-5}]\\
& & \qquad\qquad\qquad\qquad\qquad -2(p,q)\, (\sum_{k}q_k x_{k\alpha})\, Q^{-3}\nonumber\\
&=&\frac{1}{Q^5}\{(q,q)[(n-8)PQ^2-6\kappa(P,Q)Q+6(q,q)P]\\
& &\qquad\qquad\qquad\qquad -2(p,q)\, (\sum_{k}q_k x_{k\alpha})\, Q^{2}\}\nonumber
\end{eqnarray*}
Now it is obvious that if $(q,q)=0$ and $(p,q)=0$ then the bitension field vanishes. Since the only polynomial of degree one in the numerator of the last equation is $6(q,q)^2P$, it is clear that the vanishing of the bitension field implies $(q,q)=0$. From this it is immediate to deduce that also $(p,q)=0$, so the converse is also true.
\end{proof}

\begin{remark}
We point out that the solutions provided by Theorem~\ref{theorem-orthogonal-group} in the case that $\alpha=\beta$ and $n=4$ are precisely those given at the end of Example~\ref{ex-riemannian-submersions-biharmonic}.
\end{remark}

\section{The quaternionic unitary group $\Sp n$}\label{section-Sp(n)}

In this section we construct complex-valued proper biharmonic functions on open and dense subsets of the quaternionic unitary group $\Sp n$ i.e. the intersection of the unitary group $\U{2n}$ and the standard representation of the quaternionic general linear group $\GLH n$ in $\cn^{2n\times 2n}$ given by
$$
(z+jw)\mapsto q=\begin{bmatrix}z & w \\ -\bar w & \bar
z\end{bmatrix}.
$$
The Lie algebra $\sp n$ of $\Sp n$ satisfies
$$
\sp{n}=\{\begin{bmatrix} Z & W
\\ -\bar W & \bar Z\end{bmatrix}\in\cn^{2n\times 2n}
\ |\ Z^*+Z=0,\ W^t-W=0\}
$$
and for this we have the standard orthonormal basis which is the union of the following three sets
$$
\{\frac 1{\sqrt 2}\begin{bmatrix}Y_{rs} & 0 \\
0 & Y_{rs}\end{bmatrix},\frac 1{\sqrt 2}\begin{bmatrix}iX_{rs} & 0 \\
0 & -iX_{rs}\end{bmatrix}\ |\ 1\le r<s\le n\},
$$
$$
\{\frac 1{\sqrt 2}\begin{bmatrix}0 & iX_{rs} \\
iX_{rs} & 0\end{bmatrix},\frac 1{\sqrt 2}\begin{bmatrix}0 & X_{rs} \\
-X_{rs} & 0\end{bmatrix}\ |\ 1\le r<s\le n\},
$$
$$
\{\frac 1{\sqrt 2}\begin{bmatrix}iD_{t} & 0 \\
0 & -iD_{t}\end{bmatrix},\frac 1{\sqrt 2}\begin{bmatrix}0 & iD_{t}  \\
iD_{t} & 0\end{bmatrix},\frac 1{\sqrt 2}\begin{bmatrix}0 & D_{t}  \\
-D_{t} & 0\end{bmatrix}\ |\ 1\le t\le n\}.
$$

For the quaternionic unitary group $\Sp n$ we have the following basic result,
which is an improvement of Lemma 6.1 of \cite{Gud-Sak-1}.

\begin{lemma}\label{lemm:quaternionic}
For $1\le j,k,\alpha,\beta \le n$, let $z_{j\alpha},w_{k \beta}:\Sp n\to\cn$ be the complex valued matrix coefficients of the standard representation of $\Sp n$ given by
$$
z_{j\alpha}:q\mapsto e_j\cdot q\cdot e_\alpha^t,\ \
w_{k \beta}:q\mapsto e_{k}\cdot q\cdot e_{n+\beta}^t,
$$
where $\{e_1,\dots ,e_{2n}\}$ is the canonical basis for $\cn^{2n}$. Then the following relations hold
$$
\tau(z_{j\alpha})= -\frac{2n+1}2\cdot z_{j\alpha},\ \ \tau(w_{k \beta})=
-\frac{2n+1}2\cdot w_{k \beta},
$$
$$
\kappa(z_{j\alpha},z_{k\beta})=-\frac 12\cdot z_{k\alpha}z_{j\beta},\ \
\kappa(w_{j\alpha},w_{k\beta})=-\frac 12\cdot w_{k\alpha}w_{j\beta},
$$
$$
\kappa(z_{j\alpha},w_{k\beta})=-\frac 12\cdot z_{k\alpha}w_{j\beta}.
$$
\end{lemma}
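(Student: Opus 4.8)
The plan is to read both sides of \eqref{tau-kappa-alie-groups} off the fact that $z_{j\alpha}$ and $w_{k\beta}$ are \emph{linear} functions of the matrix $q$. Writing $q_{ab}$ for the $(a,b)$-entry of $q\in\Sp n\subset\cn^{2n\times 2n}$, formulas \eqref{eq:derivativeZ}--\eqref{eq:derivativeZZ} give $Z(q_{ab})=(qZ)_{ab}$ and $Z^2(q_{ab})=(qZ^2)_{ab}$ for every $Z\in\sp n$, since differentiating a linear function of $q\exp(sZ)$ in $s$ simply inserts powers of $Z$ on the right. The key preliminary observation is that each vector in the displayed orthonormal basis of $\sp n$ is, as a $2n\times 2n$ matrix, either symmetric or skew-symmetric, so $[Z,Z^t]=[Z,\pm Z]=0$; hence the term $[Z,Z^t]_\g$ in \eqref{tau-kappa-alie-groups} drops out entirely and
\[
\tau(q_{ab})(q)=\Bigl(q\,\sum_{Z\in\B}Z^2\Bigr)_{ab},\qquad
\kappa(q_{ab},q_{cd})(q)=\sum_{Z\in\B}(qZ)_{ab}\,(qZ)_{cd}.
\]

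For the tension field it then suffices to identify $\Omega:=\sum_{Z\in\B}Z^2$. Being $\Ad(\Sp n)$-invariant, $\Omega$ commutes with the (irreducible) standard representation, so $\Omega=\lambda\,I_{2n}$ for some scalar $\lambda$; equivalently, the matrix coefficients of the standard representation are eigenfunctions of the Laplace--Beltrami operator. Taking traces and using $\trace Z^2=-\trace(ZZ^*)=-g(Z,Z)=-1$ for each unit $Z\in\sp n\subset\u{2n}$ gives $2n\lambda=\trace\Omega=-\dim\sp n=-n(2n+1)$, so $\lambda=-(2n+1)/2$; this yields the two stated relations for $\tau(z_{j\alpha})$ and $\tau(w_{k\beta})$ at once. (Alternatively one evaluates $\sum_{Z\in\B}Z^2$ block by block over the three displayed sets of basis vectors; each of the seven sub-families contributes a scalar multiple of $I_{2n}$, the total being $-(2n+1)/2$.)

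For the conformality operator the plan is to expand $\sum_{Z\in\B}(qZ)_{ab}(qZ)_{cd}$ over the seven sub-families of $\B$, using $Y_{rs}^2=-\tfrac12(E_{rr}+E_{ss})$, $X_{rs}^2=\tfrac12(E_{rr}+E_{ss})$, $D_t^2=E_{tt}$ and the explicit block shapes. Writing $q=\left[\begin{smallmatrix}z & w\\ -\bar w & \bar z\end{smallmatrix}\right]$ and $Z=\left[\begin{smallmatrix}A & B\\ C & D\end{smallmatrix}\right]$ with $n\times n$ blocks, a column of $Z$ with index in $\{1,\dots,n\}$ sees only the block-column $\left[\begin{smallmatrix}A\\ C\end{smallmatrix}\right]$ and one with index in $\{n+1,\dots,2n\}$ only $\left[\begin{smallmatrix}B\\ D\end{smallmatrix}\right]$; hence, for $1\le j\le n$, $(qZ)_{j\alpha}=(zA)_{j\alpha}+(wC)_{j\alpha}$ and $(qZ)_{j,n+\alpha}=(zB)_{j\alpha}+(wD)_{j\alpha}$. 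Substituting the seven families, the sub-families contributing terms in the ``wrong'' variable cancel in pairs, and the remaining ones recombine: for $\kappa(z_{j\alpha},z_{k\beta})$ and $\kappa(w_{j\alpha},w_{k\beta})$ one gets $-\tfrac12 z_{k\alpha}z_{j\beta}$ and $-\tfrac12 w_{k\alpha}w_{j\beta}$ as pure algebraic identities, no relation among the entries of $q$ being needed. For the mixed term $\kappa(z_{j\alpha},w_{k\beta})$: when $\alpha\ne\beta$ only the off-diagonal $X_{rs}$-families survive and produce $-\tfrac12 z_{k\alpha}w_{j\beta}$ directly; when $\alpha=\beta$ the bookkeeping leaves, besides $\tfrac12 z_{j\alpha}w_{k\alpha}-w_{j\alpha}z_{k\alpha}$, the sums $\tfrac12\sum_{r\ne\alpha}z_{jr}w_{kr}$ and $-\tfrac12\sum_{r\ne\alpha}w_{jr}z_{kr}$, and here one invokes the $(1,2)$-block of the unitarity relation $qq^*=I_{2n}$, which says exactly $zw^t=wz^t$, i.e.\ $\sum_r z_{jr}w_{kr}=\sum_r w_{jr}z_{kr}$; substituting this collapses the expression to $-\tfrac12 z_{k\alpha}w_{j\beta}$.

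I expect the only genuine difficulty to be organizational: systematically tracking, across the seven sub-families and the $2\times2$ block decomposition of $q$, which Kronecker-delta pairs survive and with which sign, and (for the mixed conformality relation with $\alpha=\beta$) recognizing the leftover sums as precisely the symmetry relation $zw^t=wz^t$ coming from $q\in\U{2n}$. Once those bookkeeping points are settled, each step reduces to the elementary identities above together with $\sum_{1\le r<s\le n}(E_{rr}+E_{ss})=(n-1)I_n$.
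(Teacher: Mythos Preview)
Your proposal is correct and largely parallels what the paper does, but with one genuine methodological difference worth noting. The paper's own proof is almost entirely by citation: the first four relations are taken verbatim from Lemma~6.1 of \cite{Gud-Sak-1}, and only the fifth relation receives an argument here, namely the observation that the extra term $\delta_{\alpha\beta}\sum_\tau(z_{j\tau}w_{k\tau}-z_{k\tau}w_{j\tau})$ appearing in the formula from \cite{Gud-Sak-1} vanishes because the $(1,2)$-block of $qq^*=I_{2n}$ gives $wz^t=zw^t$. Your treatment of the mixed conformality term is exactly this.

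Where you genuinely diverge is in the computation of $\tau$. Rather than summing $Z^2$ over the seven sub-families of the explicit basis (which is presumably what \cite{Gud-Sak-1} does), you invoke Schur's lemma: since the metric is $\Ad$-invariant, $\Omega=\sum_{Z\in\B}Z^2$ is a Casimir element and hence a scalar on the irreducible standard representation, and the scalar is read off from $\trace\Omega=-\dim\sp n$. This is cleaner and more conceptual than the brute-force block sum, and it generalises immediately to any compact simple group and any irreducible representation. Your preliminary observation that every listed basis vector is either symmetric or skew-symmetric (so $[Z,Z^t]=0$ and the correction term in \eqref{tau-kappa-alie-groups} drops out) is correct and makes both the $\tau$ and $\kappa$ computations straightforward. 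For the unmixed $\kappa$ relations your plan amounts to redoing the computation of \cite{Gud-Sak-1}; the bookkeeping you describe is routine, and the organisational remarks at the end of your proposal identify the only places where care is needed.
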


\begin{proof}
The first four relations were proven in \cite{Gud-Sak-1}.  Since the quaternionic unitary group $\Sp n$ is a subgroup of $\U {2n}$, a generic element
$$
\begin{bmatrix}z & w \\ -\bar w & \bar z\end{bmatrix}\in\Sp n
$$
satisfies
$$
\begin{bmatrix}z & w \\ -\bar w & \bar z\end{bmatrix}
\begin{bmatrix}z^* & -w^t \\ w* & z^t\end{bmatrix}=
\begin{bmatrix}zz^*+ww^* & wz^t-zw^t \\ \bar zw^*-\bar wz^* & \bar zz^t+\bar ww^t\end{bmatrix}=
\begin{bmatrix}I_n & 0 \\ 0 & I_n\end{bmatrix}.
$$
The equation $wz^t-zw^t=0$ shows that the formula
$$
\kappa(z_{j\alpha},w_{k\beta})=-\frac 12\cdot\big[z_{k\alpha}w_{j\beta}
-{\delta_{\alpha\beta}}\cdot \sum_{\tau=1}^n(z_{j\tau}w_{k\tau}
-z_{k\tau}w_{j\tau})\big],
$$
from Lemma 6.1 of \cite{Gud-Sak-1}, simplifies to
$$
\kappa(z_{j\alpha},w_{k\beta})=-\frac 12\cdot z_{k\alpha}w_{j\beta}.
$$
\end{proof}

In the spirit of the previous sections, we now look for proper biharmonic functions of the type $f=P\slash Q$, where $P$ and $Q$ are suitable homogeneous polynomials.

\begin{theorem}\label{teo-quotient-linear-quaternionic}
Let $p,q\in\cn^{2n}$ be linearly independent and $P,Q:\Sp n\to\cn$ be the complex-valued functions on the quaternionic unitary group defined by
$$
P(z,w)=\sum_{j}(p_j z_{j\alpha}+p_{n+j} w_{j\alpha})\ \ \text{and}\ \
Q(z,w)=\sum_{k}(q_k z_{k\beta}+q_{n+k}w_{k\beta}).
$$
Further, let the rational function $f(z,w)=P(z,w)/Q(z,w)$ be defined on the open and dense subset $W_Q=\{z+jw \in\Sp n|\ Q(z,w)\neq 0\}$ of the quaternionic unitary group.
\begin{itemize}
\item[(a)] The function $f$ is harmonic if and only if $\alpha=\beta$.
\item[(b)] The function $f$ is proper biharmonic if and only if $\alpha\neq\beta$.
\end{itemize}
\end{theorem}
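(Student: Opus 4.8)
The plan is to mimic the proof of Theorem~\ref{teo-quotient-linear} as closely as possible, with Lemma~\ref{lemm:quaternionic} in the role of Lemma~\ref{lemm:complex}, and to pay careful attention to the cross terms arising from the two families $z_{j\alpha}$ and $w_{j\alpha}$ of matrix coefficients. First I would note that every $z_{j\alpha}$ and every $w_{j\alpha}$ is an eigenfunction of the Laplace--Beltrami operator $\tau$ with eigenvalue $-\frac{2n+1}2$, hence so is every linear combination of them; in particular this holds for $P$, for $Q$, and for the auxiliary functions
$$
\hat P(z,w)=\sum_j(p_jz_{j\beta}+p_{n+j}w_{j\beta}),\qquad
\hat Q(z,w)=\sum_k(q_kz_{k\alpha}+q_{n+k}w_{k\alpha})
$$
obtained from $P$ and $Q$ by interchanging the two column indices $\alpha$ and $\beta$. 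The crucial algebraic step is then to expand $\kappa(P,Q)$ into its four blocks of terms of types $zz$, $zw$, $wz$ and $ww$, apply the five relations of Lemma~\ref{lemm:quaternionic}, and observe that the outcome reassembles as a single product, $\kappa(P,Q)=-\frac12\,\hat Q\hat P$. The same device produces the companion identities $\kappa(Q,Q)=-\frac12Q^2$, $\kappa(\hat Q,\hat P)=-\frac12PQ$, $\kappa(\hat Q,Q)=-\frac12\hat QQ$ and $\kappa(\hat P,Q)=-\frac12Q\hat P$, which are all I will need.

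Substituting $\tau(P)=-\frac{2n+1}2P$, $\tau(Q)=-\frac{2n+1}2Q$ and $\kappa(Q,Q)=-\frac12Q^2$ into the general quotient identity \eqref{tau-quotient}, the formula collapses, exactly as in the $\SU n$ case, to
$$
\tau(f)=-f-2\kappa(P,Q)Q^{-2}=-f+\hat Q\hat P\,Q^{-2}.
$$
Part~(a) follows immediately: $\tau(f)=0$ is equivalent to $\hat Q\hat P=PQ$, which is an identity when $\alpha=\beta$ (then $\hat P=P$ and $\hat Q=Q$), while for $\alpha\ne\beta$ it would force a non-trivial linear relation between the products of matrix coefficients coming from two distinct columns of the standard representation of $\Sp n$; since those products are linearly independent functions and $p,q$ are linearly independent, this cannot happen --- this is the same argument as in the proof of Theorem~\ref{teo-quotient-linear}.

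For part~(b) I would assume $\alpha\ne\beta$ and first establish, using the product rule $\tau(\varphi\psi)=\tau(\varphi)\psi+2\kappa(\varphi,\psi)+\varphi\tau(\psi)$ together with the identities above, the three auxiliary relations
$$
\tau(\kappa(P,Q))=-(2n+1)\kappa(P,Q)+\frac12PQ,\qquad
\kappa(\kappa(P,Q),Q^{-2})=2\kappa(P,Q)Q^{-2},
$$
$$
\tau(Q^{-2})=(2n-2)Q^{-2}.
$$
Then expanding $\tau^2(f)=-\tau(f)-2\tau(\kappa(P,Q)Q^{-2})$ by the same product rule and inserting these relations, the numerical coefficients of the two surviving terms cancel: in $\tau(\kappa(P,Q)Q^{-2})$ the coefficient of $\kappa(P,Q)Q^{-2}$ is $-(2n+1)+4+(2n-2)=1$ and that of $f$ is $\frac12$, and these are exactly annihilated by the $f$- and $\kappa(P,Q)Q^{-2}$-terms of $-\tau(f)$, so $\tau^2(f)=0$. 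Since part~(a) shows $f$ is not harmonic when $\alpha\ne\beta$, it is then proper biharmonic; and when $\alpha=\beta$ part~(a) gives $\tau(f)=0$, so $f$ is not proper biharmonic. This establishes the equivalence in~(b).

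The conceptual content is identical to the $\SU n$ situation treated in Theorem~\ref{teo-quotient-linear}; the only genuinely new work is the careful expansion of the conformality operator over the larger orthonormal basis of $\sp n$. I expect the main (though not deep) obstacle to be organising the two mixed relations $\kappa(z_{j\alpha},w_{k\beta})$ and $\kappa(w_{j\alpha},z_{k\beta})$: it is precisely because Lemma~\ref{lemm:quaternionic} gives the clean form $\kappa(z_{j\alpha},w_{k\beta})=-\frac12z_{k\alpha}w_{j\beta}$ --- the improvement over Lemma~6.1 of \cite{Gud-Sak-1}, with the $\delta_{\alpha\beta}$-correction removed --- that $\kappa(P,Q)$ factors as a single product and the cancellations leading to $\tau^2(f)=0$ go through as cleanly as they do.
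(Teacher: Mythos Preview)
Your proposal is correct and follows essentially the same route as the paper's own proof: both derive $\tau(f)=-f-2\kappa(P,Q)Q^{-2}$ from the general quotient formula \eqref{tau-quotient} together with Lemma~\ref{lemm:quaternionic}, and then establish the same three auxiliary relations \eqref{eq:quat-1} to show that all terms in $\tau^2(f)$ cancel. Your introduction of $\hat P,\hat Q$ simply makes explicit the factorisation that the paper records in \eqref{eq:kpq-spn}, and your companion identities $\kappa(\hat P,\hat Q)=-\tfrac12 PQ$, $\kappa(\hat P,Q)=\kappa(\hat Q,Q)=-\tfrac12(\cdot)Q$ are precisely what underlies the ``standard calculation employing Lemma~\ref{lemm:quaternionic}'' that the paper invokes to obtain \eqref{eq:quat-1}.
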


\begin{proof}
It follows from Lemma~\ref{lemm:quaternionic} that $P,Q:\U n\to \cn$ are eigenfunctions of the Laplace-Beltrami operator $\tau$ and that $\kappa(Q,Q)=-Q^2/2$.  Then we deduce from the general formula \eqref{tau-quotient} that
\begin{equation}\label{eq:tau-f-spn}
\tau(f)=-f-2\kappa(P,Q)Q^{-2}.
\end{equation}
Again applying Lemma~\ref{lemm:quaternionic}, we obtain
\begin{equation}\label{eq:kpq-spn}
\kappa(P,Q)=-\frac{1}{2}\, \sum_{j,k}(p_jz_{j\beta}+p_{n+j}w_{j\beta}) (q_k z_{k\alpha}+q_{n+k}w_{k\alpha}).
\end{equation}
Equation \eqref{eq:tau-f-spn} tells us that $f$ is harmonic if and only if $\kappa(P,Q)=-P Q/2.$
It now follows from \eqref{eq:kpq-spn} that this is true if and only if $\alpha=\beta$.

For the case $\alpha\neq\beta$, a standard calculation, employing Lemma~\ref{lemm:quaternionic}, yields
\begin{eqnarray}\label{eq:quat-1}\nonumber
\tau (Q^{-2})&=& 2(n-1) Q^{-2}, \\
\tau(\kappa(P,Q))&=&\frac{1}{2}PQ-(2n+1) \kappa(P,Q), \\ \nonumber
\kappa(\kappa(P,Q),Q^{-2})&=&2\kappa(P,Q)Q^{-2}. \\ \nonumber
\end{eqnarray}
Finally, by using \eqref{eq:quat-1}, we obtain the stated result. More precisely,
\begin{eqnarray*}
\tau^2(f)
&=&-\tau(f)-2\tau(\kappa(P,Q)Q^{-2})\\
&=&f+2\kappa(P,Q)Q^{-2}-2\tau(\kappa(P,Q)Q^{-2})\\
&=&f+2\kappa(P,Q)Q^{-2}-2\tau(\kappa(P,Q))Q^{-2}\\
& &\qquad\qquad\qquad -2\kappa(P,Q)\tau(Q^{-2})-4\kappa(\kappa(P,Q),Q^{-2})\\
&=&f+2\kappa(P,Q)Q^{-2}-f+2(2n+1)\kappa(P,Q)Q^{-2}\\
& &\qquad\qquad\qquad -4(n-1)\kappa(P,Q)Q^{-2}-8\kappa(P,Q)Q^{-2}\\
&=& (2+4n+2-4n+4-8)\kappa(P,Q)Q^{-2} \\
&=&0.
\end{eqnarray*}
\end{proof}

\section{The special unitary group $\SU n$ revisited}

In this section we construct proper biharmonic functions on open subsets of the special unitary group $\SU n$.  They are quotients of $2\times2$ determinants from the matrix for the standard representation of $\SU n$.  For this purpose, we first establish the following result of independent interest.
\begin{proposition}\label{proposition-tau-of-a-determinant-in-U(n)}
Let $d_k$ be a $k\times k$ determinant from the generic element of $\U n$.
Then
\begin{equation}\label{tau-d-k}
\tau(d_k)= - k(n-k+1)d_k.
\end{equation}
\end{proposition}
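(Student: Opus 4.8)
The plan is to argue by induction on $k$, using Laplace (cofactor) expansion of a $k\times k$ determinant along its first row together with the Leibniz rules
$$\tau(fh)=\tau(f)\,h+2\kappa(f,h)+f\,\tau(h),\qquad \kappa(f,gh)=\kappa(f,g)\,h+g\,\kappa(f,h)$$
and the two identities of Lemma~\ref{lemm:complex}. Fix row indices $j_1<\cdots<j_k$ and column indices $\alpha_1<\cdots<\alpha_k$ and write $d_k=\det(z_{j_a\alpha_b})_{1\le a,b\le k}$, which is the general form of a $k\times k$ determinant from the generic element of $\U n$. The base case $k=1$ is exactly $\tau(z_{j\alpha})=-n\,z_{j\alpha}=-1\cdot(n-1+1)z_{j\alpha}$.

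For the inductive step I would expand $d_k=\sum_{b=1}^k(-1)^{1+b}z_{j_1\alpha_b}D_b$, where $D_b$ is the $(k-1)\times(k-1)$ minor obtained by deleting row $j_1$ and column $\alpha_b$, and apply the Leibniz rule for $\tau$ termwise. Lemma~\ref{lemm:complex} gives $\tau(z_{j_1\alpha_b})=-n\,z_{j_1\alpha_b}$, and the induction hypothesis gives $\tau(D_b)=-(k-1)(n-k+2)D_b$; re-summing each against $(-1)^{1+b}$ reconstitutes $d_k$ by the same cofactor expansion, so these two groups of terms contribute $-n\,d_k$ and $-(k-1)(n-k+2)d_k$. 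Everything then hinges on the cross term $2\sum_b(-1)^{1+b}\kappa(z_{j_1\alpha_b},D_b)$.

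To evaluate it, I would first establish the auxiliary identity
$$\kappa\bigl(z_{j\alpha},\det(z_{i_a\gamma_b})\bigr)=-\sum_a z_{i_a\alpha}\,D^{(a\to j)},$$
where $D^{(a\to j)}$ denotes the determinant obtained from the minor by replacing its $a$-th row $(z_{i_a\gamma_1},\dots)$ with $(z_{j\gamma_1},\dots)$. This follows by differentiating the determinant entrywise (since $\kappa$ is a derivation in each argument), using $\kappa(z_{j\alpha},z_{i\gamma})=-z_{i\alpha}z_{j\gamma}$, and recognizing $\sum_b\mathrm{cof}_{ab}\,z_{j\gamma_b}$ as a Laplace expansion. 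Substituting $D=D_b$ with $(j,\alpha)=(j_1,\alpha_b)$ and exchanging the order of summation, the inner sum over $b$ is, for each fixed replaced-row index, precisely the cofactor expansion along the first row of the matrix $(z_{j_a\alpha_b})$ with its rows $1$ and $a+1$ interchanged, hence equals $-d_k$. Summing over the $k-1$ admissible indices yields $2(k-1)d_k$ for the cross term, and combining all contributions gives $\tau(d_k)=\bigl[-n-(k-1)(n-k+2)+2(k-1)\bigr]d_k=-k(n-k+1)d_k$.

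The hard part will be the bookkeeping in the last step: one must keep track of the signs produced by the nested Laplace expansions and check that, after substitution, the double sum of $(k-1)\times(k-1)$ minors genuinely re-assembles into row-transposed copies of the full $k\times k$ determinant $d_k$. Deriving the auxiliary $\kappa$-identity and identifying the resulting expression with $\det$ up to a single row transposition is the only place where real care is needed; the remaining manipulations are routine applications of the Leibniz rules and Lemma~\ref{lemm:complex}.
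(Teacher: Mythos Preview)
Your inductive argument via cofactor expansion is correct. The auxiliary identity $\kappa(z_{j\alpha},\det(z_{i_a\gamma_b}))=-\sum_a z_{i_a\alpha}\,D^{(a\to j)}$ follows exactly as you outline from the derivation property of $\kappa$ and Lemma~\ref{lemm:complex}, and the inner sum over $b$ for fixed $a$ is indeed the first-row Laplace expansion of the $k\times k$ matrix with rows $1$ and $a$ interchanged, hence equals $-d_k$; the final arithmetic $-n-(k-1)(n-k+2)+2(k-1)=-k(n-k+1)$ closes the induction.

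The paper takes a different route: it expands $d_k$ directly via the permutation formula $d_k=\sum_{\sigma\in\S_k}(-1)^{s(\sigma)}z_{1\sigma(1)}\cdots z_{k\sigma(k)}$ and applies the product rule for $\tau$ to the full $k$-fold product in one stroke. The $\tau$-terms contribute $-nk\,d_k$, while each $\kappa(z_{j\sigma(j)},z_{\ell\sigma(\ell)})=-z_{j\sigma(\ell)}z_{\ell\sigma(j)}$ effects a transposition of $\sigma$ and simultaneously flips the sign, so that after swapping the order of summation each of the $\binom{k}{2}$ pairs $(j,\ell)$ returns precisely $d_k$; this yields $-nk+k(k-1)=-k(n-k+1)$ without induction. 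Your approach trades that global symmetry argument for an inductive one, at the cost of the nested-sign bookkeeping you flagged; the paper's approach avoids induction but requires recognising the transposition structure in the double sum. Both work, and the computational effort is comparable.
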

\begin{proof} We shall use the formula which gives the tension field of a product. More precisely, let
\begin{equation}\label{product-function}\nonumber
f=\prod_{i=1}^k \, f_i \,.
\end{equation}
Then
\begin{equation}\label{tau-of-product-function}
\tau(f)=\sum_{j=1}^k \, \left [\tau(f_j)\,\prod_{i \neq j;\, i=1}^k \, f_i \right ] +2\, \sum_{j<\ell;\,j, \ell=1}^k \,\left [ \kappa(f_j,f_\ell)\prod_{i \neq j,\, i \neq \ell;\, i=1}^k \,f_i \,\right].
\end{equation}
Next, by renumbering rows and columns if necessary, we observe that it is not restrictive to assume that both the row and the column indices of $d_k$ range from $1$ to $k$. In particular, we can write
\begin{equation}\label{determinant-explicit}
d_k= \sum_{\sigma \in \S_k} \, (-1)^{s(\sigma)} \, \, z_{1\,\sigma(1)} \cdots z_{k\,\sigma(k)}\, ,
\end{equation}
where $S_k$ is the set of permutations of $\{1, \ldots,k \}$ and $s(\sigma)$ is the sign of $\sigma$. Now we can compute $\tau(d_k)$. We apply \eqref{tau-of-product-function} to \eqref{determinant-explicit}: by using \eqref{lemma5-1} and the linearity we easily obtain
\begin{equation}\label{tau-determinant-eq:1}
\tau(d_k)=-n\,k\, d_k+\sum_{\sigma \in \S_k} (-1)^{s(\sigma)}\hskip -.3cm\sum_{j<\ell;j, \ell=1}^k\left [ \kappa(z_{j\,\sigma(j)},z_{\ell\,\sigma(\ell)})\hskip -.3cm\prod_{i \neq j, i \neq \ell;\, i=1}^k\hskip -.3cm z_{i\sigma(i)}\right ].
\end{equation}
By using \eqref{lemma5-1} into \eqref{tau-determinant-eq:1} we have
\begin{equation}\label{tau-determinant-eq:2}
\tau(d_k)=-n\,k\, d_k\,-\, 2\,\sum_{\sigma \in \S_k}(-1)^{s(\sigma)}\hskip -.3cm \sum_{j<\ell;\,j, \ell=1}^k \left [ z_{j\,\sigma(\ell)} \, z_{\ell\,\sigma(j)}\hskip -.3cm\prod_{i \neq j,\, i \neq \ell;\, i=1}^k\hskip -.3cm z_{i\,\sigma(i)}\right ] .
\end{equation}
Now we commute the order of the sums. Then, observing that \eqref{lemma5-1} has produced a minus sign together with a change of the sign of the corresponding permutation $\sigma$, it is not difficult to deduce that \eqref{tau-determinant-eq:2} becomes
\begin{eqnarray}\label{tau-determinant-eq:3}\nonumber
\tau(d_k)&=&-n\,k\, d_k\,+\, 2\,\sum_{j<\ell;\,j, \ell=1}^k \,  \left [ \sum_{\sigma \in \S_k} \, (-1)^{s(\sigma)} \, \prod_{ i=1}^k \,\, z_{i\,\sigma(i)} \, \right ] \\ \nonumber
&=&-n\,k\, d_k\,+\, 2\,\sum_{j<\ell;\,j, \ell=1}^k \,  d_k  \\ \nonumber
&=&-n\,k\, d_k\,+\, 2\, \frac{k(k-1)}{2} \, d_k \,\, ,
\end{eqnarray}
from which \eqref{tau-d-k} follows immediately.
\end{proof}

We can now state our main result on the quotient of two determinants.

\begin{theorem}\label{theorem-biharmonicity-2x2-determinants}
Let $P,Q:\U n\to\cn$ be the complex-valued functions on the unitary group given by the $2\times 2$ determinants
$$
P(z)=z_{j\alpha}z_{k\beta}-z_{k\alpha}z_{j\beta}\ \ \text{and}\ \
Q(z)=z_{r\gamma}z_{s \delta}-z_{s \gamma}z_{r\delta}.
$$
Further let $f(z)=P(z)/Q(z)$ be the rational function defined on the open and dense subset $W_Q=\{z\in\U n|\ Q(z)\neq 0\}$
of the unitary group. Then
\begin{enumerate}
\item[(a)] $f$ is harmonic if and only if $(j,k)=(r,s)$ or $(\alpha,\beta)=(\gamma,\delta)$.
\item[(b)] $f$ is properly biharmonic if and only if $(j,k)\neq(r,s)$ and $(\alpha,\beta)\neq(\gamma,\delta)$.
\end{enumerate}
The corresponding statements hold for the functions induced on $\SU n$.
\end{theorem}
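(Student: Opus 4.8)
The plan is to imitate the proofs of Theorems~\ref{teo-quotient-linear} and \ref{theorem-orthogonal-group}: reduce everything to the eigenvalue property of the $2\times 2$ minors established in Proposition~\ref{proposition-tau-of-a-determinant-in-U(n)}, supplemented by an explicit formula for the conformality operator $\kappa(P,Q)$. By Proposition~\ref{proposition-tau-of-a-determinant-in-U(n)} with $k=2$ the functions $P$ and $Q$ are eigenfunctions of the Laplace--Beltrami operator with the common eigenvalue $-2(n-1)$, so the first and last terms of the general identity \eqref{tau-quotient} cancel and $Q^3\tau(f)=2P\,\kappa(Q,Q)-2Q\,\kappa(P,Q)$. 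Writing $P^{ab}_{\mu\nu}=z_{a\mu}z_{b\nu}-z_{b\mu}z_{a\nu}$ for a generic $2\times2$ minor (so $P=P^{jk}_{\alpha\beta}$ and $Q=P^{rs}_{\gamma\delta}$), a short computation from \eqref{lemma5-1} gives $\kappa(Q,Q)=-2Q^2$, whence
\[
\tau(f)=-4f-2\,\kappa(P,Q)\,Q^{-2}.
\]

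The heart of the matter is the formula for $\kappa(P,Q)$. Using $\kappa(P,Q)=\sum_{Z\in\B}Z(P)\,Z(Q)$ with $Z(z_{a\mu})=\sum_m z_{am}Z_{m\mu}$, the derivation property of $Z$ yields $Z(P)=\sum_m\bigl(Z_{m\beta}P^{jk}_{\alpha m}-Z_{m\alpha}P^{jk}_{\beta m}\bigr)$, and analogously for $Q$; contracting the two sums by means of the identity $\sum_{Z\in\B}Z_{ab}Z_{cd}=-\delta_{ad}\delta_{bc}$ for the standard orthonormal basis of $\u n$, one obtains
\[
\kappa(P,Q)=-P^{jk}_{\alpha\delta}P^{rs}_{\gamma\beta}+P^{jk}_{\alpha\gamma}P^{rs}_{\delta\beta}+P^{jk}_{\beta\delta}P^{rs}_{\gamma\alpha}-P^{jk}_{\beta\gamma}P^{rs}_{\delta\alpha}
\]
(the same formula can be read off the sixteen-term expansion of $g(\nabla P,\nabla Q)$ via \eqref{lemma5-1}). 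Statement (a) now drops out. If $\{\alpha,\beta\}=\{\gamma,\delta\}$ then two of the four minors on the right vanish and antisymmetry collapses the other two to $-2PQ$, so $\tau(f)=0$. If $\{j,k\}=\{r,s\}$, then, writing $M_{\mu\nu}:=P^{jk}_{\mu\nu}=P^{rs}_{\mu\nu}$, antisymmetry turns the right-hand side into $2M_{\alpha\delta}M_{\beta\gamma}-2M_{\alpha\gamma}M_{\beta\delta}$, and the three-term Pl\"ucker relation $M_{\alpha\beta}M_{\gamma\delta}-M_{\alpha\gamma}M_{\beta\delta}+M_{\alpha\delta}M_{\beta\gamma}=0$ for the $2\times n$ matrix with rows $j,k$ again gives $\kappa(P,Q)=-2PQ$ and $\tau(f)=0$. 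Conversely, if neither index coincidence holds, then $\kappa(P,Q)+2PQ$ is a nonzero polynomial in the entries $z_{a\mu}$ — one exhibits a monomial occurring in exactly one of its summands — and, since a polynomial in the $z_{a\mu}$ vanishing on $\U n$ is identically zero ($\U n$ being Zariski dense in $\GLC n$), we get $\tau(f)\not\equiv 0$. Hence $f$ is harmonic exactly in the asserted cases.

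For the biharmonicity it is enough to prove that $\tau^2(f)=0$ for every admissible choice of indices; combined with (a) this yields (b), since in the harmonic case $f$ is not proper biharmonic, while in the remaining case $\tau(f)\neq 0$. From $\kappa(Q,Q)=-2Q^2$ and $\tau(Q)=-2(n-1)Q$ the power rule gives $\tau(Q^{-2})=(4n-16)Q^{-2}$. It then remains to compute $\tau(\kappa(P,Q))$ and $\kappa(\kappa(P,Q),Q^{-2})$: by the formula above $\kappa(P,Q)$ is a linear combination of products of two $2\times2$ minors, so each summand can be differentiated using Proposition~\ref{proposition-tau-of-a-determinant-in-U(n)} together with the same $\kappa$-formula — now with overlapping column labels, so that many of the minors produced vanish and antisymmetry together with the Pl\"ucker relation causes further collapses. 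Substituting the resulting expressions, together with $\tau(Q^{-2})$, into
\[
\tau^2(f)=-4\tau(f)-2\bigl[\tau(\kappa(P,Q))\,Q^{-2}+2\,\kappa(\kappa(P,Q),Q^{-2})+\kappa(P,Q)\,\tau(Q^{-2})\bigr]
\]
should make the coefficients of $f$ and of $\kappa(P,Q)Q^{-2}$ cancel, giving $\tau^2(f)=0$.

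The main obstacle is precisely this last computation: controlling the index bookkeeping in $\tau(\kappa(P,Q))$ and $\kappa(\kappa(P,Q),Q^{-2})$ and verifying that, after all the Pl\"ucker-type simplifications, these really do reduce to multiples of $PQ$ and of $\kappa(P,Q)Q^{-2}$ with numerical coefficients conspiring to vanish (compare the analogous but simpler identities \eqref{varie-th-U(n)} in the proof of Theorem~\ref{teo-quotient-linear}). Finally, each $2\times2$ minor is multiplied by $e^{2i\theta}$ under $z\mapsto e^{i\theta}z$, so $f$ is $\s^1$-invariant and descends to a function on $\SU n$; since $\pi:\U n\to\SU n$ is a harmonic morphism with constant dilation, Corollary~\ref{prop:lift-proper-k-harmonic} transfers all the above conclusions to the induced function.
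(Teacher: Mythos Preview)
Your setup and your treatment of part (a) are correct and match the paper's approach: from Proposition~\ref{proposition-tau-of-a-determinant-in-U(n)} and $\kappa(Q,Q)=-2Q^2$ one gets $\tau(f)=-4f-2\kappa(P,Q)Q^{-2}$, and harmonicity comes down to $\kappa(P,Q)=-2PQ$. Your formula for $\kappa(P,Q)$ in terms of minors $P^{jk}_{\mu\nu}$ that keep the row pairs $\{j,k\}$ and $\{r,s\}$ separate is a legitimate regrouping; the paper writes the same quantity as a sum of products of minors that \emph{mix} the row indices (its $d_{k\alpha r\beta}\,d_{j\gamma s\delta}$, etc.). Your use of the Pl\"ucker relation for the case $\{j,k\}=\{r,s\}$ is in fact a bit slicker than the paper's direct expansion. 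For the converse in (a) you appeal to Zariski density and the existence of a surviving monomial; the paper instead does a case-by-case check. Either route works, but yours is not quite complete until you actually exhibit such a monomial.

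The genuine gap is in part (b). You yourself flag the ``main obstacle'' and hope that $\tau(\kappa(P,Q))$ and $\kappa(\kappa(P,Q),Q^{-2})$ will reduce to multiples of $PQ$ and of $\kappa(P,Q)Q^{-2}$ whose numerical coefficients conspire to cancel, in analogy with \eqref{varie-th-U(n)}. That expectation is wrong. Unlike the $1\times 1$ case of Theorem~\ref{teo-quotient-linear}, these quantities do \emph{not} collapse to such multiples: when the paper carries out the calculation, many new determinant products appear in both $\tau(\kappa(P,Q))$ and $\kappa(\kappa(P,Q),Q)$, and after substitution into $Q^3\tau^2(f)$ what remains is not zero term-by-term but the nontrivial expression
\[
Q^{3}\tau^2(f)=16\,d_{s\alpha r\beta}\bigl(-d_{s\gamma j\delta}\,d_{k\gamma r\delta}+d_{j\gamma r\delta}\,d_{s\gamma k\delta}+d_{k\gamma j\delta}\,d_{s\gamma r\delta}\bigr),
\]
which vanishes only because of a further Pl\"ucker-type identity among the $d$'s in the second factor, verified by direct expansion. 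So the cancellation mechanism is not the one you anticipate, and the computation you omit is genuinely the heart of the proof; the paper itself remarks that parts of it were carried out and checked with \emph{Mathematica}. Your closing observation on $\s^1$-invariance and the descent to $\SU n$ via Corollary~\ref{prop:lift-proper-k-harmonic} is correct.
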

\begin{proof}

First, we need to compute the tension field $\tau(f)$. According to Proposition~\ref{proposition-tau-of-a-determinant-in-U(n)}, $P,Q:\U n\to \cn$ are eigenfunctions of the Laplace-Beltrami
operator $\tau$ with eigenvalue $\lambda=-2(n-1)$. Moreover, a simple direct computation  shows that
$$\kappa(Q,Q)=-2 \, Q^2.$$
Now, by using \eqref{tau-quotient} we deduce that
\begin{equation}\label{general-tau}
\tau(f)=-4f-2 \, \kappa(P,Q) \, Q^{-2}.
\end{equation}
Therefore, we now need to write down the explicit expression of $\kappa(P,Q)$. To this purpose, we introduce the following notation. We write
\begin{equation}\label{notation1}
d_{j\alpha k\beta }=  z_{j\alpha}z_{k\beta}-z_{j\beta}z_{k\alpha}.
\end{equation}
 In particular, we observe that, according to this notation, we have
$$
P=d_{j\alpha k\beta } \qquad {\rm and} \qquad Q=d_{r\gamma s\delta }.
$$
For future reference, we also point out the following general symmetry property
\begin{equation}\label{symmetry-1}
d_{j\alpha k\beta }=-d_{k\alpha j\beta },
\end{equation}
which of course implies
\begin{equation}\label{symmetry-1-bis}
d_{j\alpha j\beta }=0.
\end{equation}
Now we perform a direct computation
\begin{eqnarray}\label{k(P,Q)}
\kappa (P,Q)&=& \kappa (d_{j\alpha k\beta },d_{r\gamma s\delta })\\ \nonumber
 &=&d_{k\alpha r\beta } \, d_{j\gamma s\delta } - \,d_{j\alpha r\beta } \, d_{k\gamma s\delta } -\,d_{k\alpha s\beta } \, d_{j\gamma r\delta } + d_{j\alpha s\beta } \, d_{k\gamma r\delta }. \\\nonumber
\end{eqnarray}
We are now in the right position to prove (a). According to \eqref{general-tau} $f$ is harmonic if and only if
\begin{equation}\label{k(P,Q)-bis}
\kappa(P,Q)=-2 P Q .
\end{equation}
If we set $(j,k)=(r,s)$ in \eqref{k(P,Q)}, then it is easy using \eqref{symmetry-1} and \eqref{symmetry-1-bis} to check that \eqref{k(P,Q)-bis} holds.
Next, if $(\alpha,\beta)=(\gamma,\delta)$ \eqref{k(P,Q)} becomes
$$
\kappa(P,Q)=2(d_{j \alpha  s \beta }\, d_{k \alpha  r \beta }-d_{j \alpha  r \beta }\, d_{k \alpha  s \beta })
$$
which, taking into account the definition \eqref{notation1} and computing, gives again \eqref{k(P,Q)-bis}. In order to prove the converse, a case by case inspection shows that in all the  cases where none of the two conditions $(j,k)=(r,s)$, $(\alpha,\beta)=(\gamma,\delta)$ is satisfied, \eqref{k(P,Q)-bis} does not hold. More precisely, by way of example,
assume that $j=r$, $k\neq s$ $\alpha=\gamma$ and $\beta\neq\delta$. Then the explicit computation gives
\begin{eqnarray}\nonumber
\kappa(P,Q)+2P Q&=& z_{j\alpha }(- z_{j\alpha } z_{k\delta } z_{s \beta }+ z_{j \alpha }  z_{k \beta }  z_{s \delta }-
z_{j \delta }  z_{k \beta }  z_{s \alpha }\\ \nonumber
& & \qquad\qquad +   z_{j \beta }  z_{k \delta }  z_{s \alpha
}+   z_{j \delta }  z_{k \alpha }  z_{s \beta }-   z_{j \beta }  z_{k \alpha }
z_{s \delta })
\end{eqnarray}
which does not vanish identically. The other cases are similar, so ending the proof of (a).

To prove (b) we first introduce the following notation
\begin{equation}\label{notation-2}\nonumber
K_{j\alpha k\beta ,r\gamma s\delta }=\kappa (d_{j\alpha k\beta },d_{r\gamma s\delta }).
\end{equation}
By using \eqref{symmetry-1}, it is easy to deduce the following general symmetry properties
\begin{equation}\label{symmetry-2}
\begin{aligned}
K_{j\alpha k\beta ,r\gamma s\delta }&= -\,K_{k\alpha j\beta ,r\gamma s\delta }\\
K_{j\alpha k\beta ,r\gamma s\delta }&= -\,K_{j\alpha k\beta ,s\gamma r\delta }\\
K_{j\alpha k\beta ,j\gamma k\delta }&=  -2\, d_{j\alpha k\beta } \, d_{j\gamma k\delta }\\
K_{k\gamma s\delta ,r\gamma s\delta }&=  -2\, d_{k\gamma s\delta  } \, d_{r\gamma s\delta }\\
K_{r\alpha k\beta ,r\gamma s\delta }&=  -\, d_{s\alpha r\beta  } \, d_{k\gamma r\delta }
-\, d_{k\alpha r\beta  } \, d_{s\gamma r\delta }
\end{aligned}
\end{equation}

Now, we proceed to the computation of the bitension field $\tau^2(f)$.
Starting from \eqref{general-tau}, it is easy to obtain
\begin{eqnarray}\label{general-bi-tau}
\tau^2(f)&=&-4\tau(f)-2\, \tau(\kappa(P,Q)) \, Q^{-2}\\ \nonumber
& &\qquad\qquad -2\kappa(P,Q)\, \tau(Q^{-2})-4\,\kappa(\kappa(P,Q),Q^{-2}).
\end{eqnarray}
Next, using \eqref{general-tau} and
$$
\kappa(\kappa(P,Q),Q^{-2})= -2\,Q^{-3}\, \kappa(\kappa(P,Q),Q)\,,\quad \tau(Q^{-2})=4(n-4)\,Q^{-2}
$$
we can rewrite \eqref{general-bi-tau} as follows
\begin{eqnarray}\label{general-bi-tau-bis}
Q^{3} \, \tau^2(f)&=&16\, PQ^2 +8(5-n)\,Q\, \kappa(P,Q)\\ \nonumber
& &\qquad\qquad -2\, Q\, \tau(\kappa(P,Q)) +8\,\kappa(\kappa(P,Q),Q).
\end{eqnarray}
Next, we compute explicitly
\begin{eqnarray}\label{k(k(P,Q),Q)}
\kappa(\kappa(P,Q),Q)&=&-d_{k\gamma s\delta } \, K_{j\alpha r\beta ,r\gamma s\delta }+d_{k\gamma r\delta }
\, K_{j\alpha s\beta ,r\gamma s\delta }\\ \nonumber
& & \qquad\qquad -d_{k\alpha s\beta } \, K_{j\gamma r\delta
,r\gamma s\delta }+d_{k\alpha r\beta } \, K_{j\gamma s\delta ,r\gamma s\delta
}\\ \nonumber
& & +d_{j\gamma s\delta } \, K_{k\alpha r\beta ,r\gamma s\delta }-d_{j\gamma r\delta }
\, K_{k\alpha s\beta ,r\gamma s\delta }\\ \nonumber
& &\qquad\qquad +d_{j\alpha s\beta } \, K_{k\gamma r\delta
,r\gamma s\delta }-d_{j\alpha r\beta } \, K_{k\gamma s\delta ,r\gamma s\delta }. \\ \nonumber
\end{eqnarray}
Now, using the various symmetries i.e. \eqref{symmetry-1}, \eqref{symmetry-1-bis}, \eqref{symmetry-2}, after a long computation we find that \eqref{k(k(P,Q),Q)} takes the following form
\begin{eqnarray}\label{k(k(P,Q),Q)-bis}
\kappa(\kappa(P,Q),Q)&=&-\, d_{j\gamma s\delta} \, d_{k\gamma r\delta} \, d_{r\alpha s\beta}+\, d_{j\gamma r\delta}
\, d_{k\gamma s\delta} \, d_{r\alpha s\beta}\\ \nonumber
& &\qquad\qquad -\, d_{j\gamma s\delta} \, d_{k\alpha r\beta}
\, d_{r\gamma s\delta}+2 \, d_{j\gamma r\delta} \, d_{k\alpha s\beta} \, d_{r\gamma s\delta
}\\ \nonumber
& & -2 \, d_{j\alpha s\beta} \, d_{k\gamma r\delta} \, d_{r\gamma s\delta}+\, d_{j\alpha r\beta}
\, d_{k\gamma s\delta} \, d_{r\gamma s\delta}\\ \nonumber
& &\qquad\qquad +\, d_{j\gamma s\delta} \, d_{k\gamma r\delta}
\, d_{s\alpha r\beta}-\, d_{j\gamma r\delta} \, d_{k\gamma s\delta} \, d_{s\alpha r\beta}\\ \nonumber
& &+2
\, d_{j\gamma s\delta} \, d_{k\alpha r\beta} \, d_{s\gamma r\delta}-\, d_{j\gamma r\delta}
\, d_{k\alpha s\beta} \, d_{s\gamma r\delta}\\ \nonumber
& & \qquad\qquad +\, d_{j\alpha s\beta} \, d_{k\gamma r\delta}
\, d_{s\gamma r\delta}-2 \, d_{j\alpha r\beta} \, d_{k\gamma s\delta} \, d_{s\gamma r\delta}. \\ \nonumber
\end{eqnarray}
Next, we compute the other relevant term
\begin{eqnarray} \label{tau(k(P,Q)}
& &\tau (\kappa(P,Q)) \\ \nonumber
&=&  -4(n-1)\, K_{j\alpha k\beta ,r\gamma s\delta } +2\, K_{k\alpha r\beta ,j\gamma s\delta }-2\, K_{j\alpha r\beta ,k\gamma s\delta } \\ \nonumber
& & \qquad\qquad-2\, K_{k\alpha s\beta ,j\gamma r\delta } +2\, K_{j\alpha s\beta ,k\gamma r\delta }  \\ \nonumber
&=& -4(n-1)\, \kappa (P,Q) +2\, K_{k\alpha r\beta ,j\gamma s\delta }-2\, K_{j\alpha r\beta ,k\gamma s\delta }  \\ \nonumber
& & \qquad\qquad-2\, K_{k\alpha s\beta ,j\gamma r\delta } +2\, K_{j\alpha s\beta ,k\gamma r\delta } \\ \nonumber
&=& -4 (n-1) (d_{j \gamma  s \delta }\, d_{k \alpha  r \beta }-d_{j \gamma  r \delta } \,d_{k \alpha
s \beta }+d_{j \alpha  s \beta }\, d_{k \gamma  r \delta }-d_{j \alpha  r \beta }\, d_{k \gamma s \delta })  \\\nonumber
& &\qquad\qquad +2(d_{r \alpha  j \beta } \,d_{k \gamma  s \delta }-d_{j \gamma  s \delta }\, d_{r \alpha k \beta }+d_{j \gamma  k \delta }\, d_{r \alpha  s \beta }-d_{k \gamma  j \delta } \,d_{r \alpha s \beta }\\\nonumber
& & \qquad\qquad  +d_{r \gamma  j \delta } \,d_{k \alpha  s \beta }-d_{j \alpha  s \beta }\, d_{r \gamma
k \delta }+d_{j \alpha  k \beta } \,d_{r \gamma  s \delta }-d_{k \alpha  j \beta }\, d_{r \gamma
s \delta }\\\nonumber
& & \qquad\qquad   -d_{s \alpha  j \beta } \,d_{k \gamma  r \delta }+d_{j \gamma  r \delta } \,d_{s \alpha
k \beta }-d_{j \gamma  k \delta } \,d_{s \alpha  r \beta }+d_{k \gamma  j \delta } \,d_{s \alpha
r \beta }\\\nonumber
& & \qquad\qquad   -d_{s \gamma  j \delta } \,d_{k \alpha  r \beta }+d_{j \alpha  r \beta }\, d_{s \gamma
k \delta }-d_{j \alpha  k \beta } \,d_{s \gamma  r \delta }+d_{k \alpha  j \beta } \,d_{s \gamma
r \delta }).
\end{eqnarray}
We then substitute \eqref{k(k(P,Q),Q)-bis} and \eqref{tau(k(P,Q)}
into \eqref{general-bi-tau-bis}, which becomes, after a long but straightforward  simplification
taking into account the symmetries \eqref{symmetry-1} and \eqref{symmetry-1-bis},  the following expression
\begin{equation}\label{last}
 Q^{3} \, \tau^2(f)=  16 \, d_{s\alpha r\beta}\, (- \, d_{s\gamma j\delta} \, d_{k\gamma r\delta}+ \, d_{j\gamma r\delta } \,  d_{s\gamma k\delta}+ \, d_{k\gamma j\delta } \, d_{s\gamma r\delta }).
\end{equation}
Finally, a simple direct computation, using the definition \eqref{notation1}, shows that the right-hand side of \eqref{last} vanishes.
\end{proof}

\begin{example}
Let $P,Q:\U {n+4}\to\cn$ be the function given by the $2\times 2$ determinants
$$
P(z)=z_{11}z_{22}-z_{12}z_{21}\ \ \text{and}\ \ Q(z)=z_{33}z_{44}-z_{34}z_{43}.
$$
Further let $f(z)=P(z)/Q(z)$ be defined on the open and dense subset
$$
W_Q=\{z\in\U{n+4}|\ Q(z)\neq 0\}.
$$
According to Theorem \ref{theorem-biharmonicity-2x2-determinants}, $f$ gives rise to a proper biharmonic function on the special unitary group $\SU{n+4}$.  This is clearly invariant under the action of the subgroup
$\text{\bf S}(\U 2\times\U 2\times\U n)$ so it induces a proper biharmonic function on the complex flag manifold
$$
\SU{n+4}/\text{\bf S}(\U 2\times\U 2\times\U n)
$$
via the harmonic morphism
$$
\pi:\SU{n+4}\to\SU{n+4}/\text{\bf S}(\U 2\times\U 2\times\U n).
$$
Note that in the particular case when $n=0$ the flag manifold is the complex Grassmannian
$\SU{4}/\text{\bf S}(\U 2\times\U 2)$.
\end{example}

\begin{remark}\label{remark-other-examples} Parts of the computations in the proof of Theorem~\ref{theorem-biharmonicity-2x2-determinants} were carried out and checked by using the software \emph{Mathematica}. Our result provides an ample family of new proper biharmonic functions. However, we point out that, by using \emph{Mathematica}, we were also able to check, in various examples, that the conclusion of the theorem is still true in the case of the quotient of $3 \times 3$ determinants. Similarly, the same happened if $P$ and $Q$ are suitable linear combinations of $2 \times 2$ and $3 \times 3$ determinants, respectively. So we believe that Theorem~\ref{theorem-biharmonicity-2x2-determinants} can reasonably be extended to include more general situations which involve $k \times k$ determinants. Since the amount of computational effort required to handle these cases appear to be extremely heavy, we find it reasonable not to include these developments in the present work.
\end{remark}

\section{The Duality}\label{section-duality}

The approach and the methods of this section were introduced in \cite{Gud-Sve-1}. Let $G$ be a non-compact semisimple Lie group with the Cartan decomposition $\g=\k+\p$ of the Lie algebra of $G$ where $\k$ is the Lie algebra of a maximal compact subgroup $K$. Let $G^\cn$ denote the complexification of $G$ and $U$ be the compact subgroup of $G^\cn$ with Lie algebra $\un=\k+i\p$. Let $G^\cn$ and its subgroups be equipped with a left-invariant semi-Riemannian metric which is a multiple of the Killing form by a negative constant.  Then the subgroup $U$ of $G^\cn$ is Riemannian and $G$ is semi-Riemannian.

Let $f:W\to\cn$ be a real analytic function from an open subset $W$ of $G$. Then $f$ extends uniquely to a holomorphic function $f^\cn:W^\cn\to\cn$ from some open subset $W^\cn$ of $G^\cn$. By restricting this to $U\cap W^\cn$ we obtain a real analytic function $f^*:W^*\to\cn$ on some open subset $W^*$ of $U$. The function $f^*$ is called the {\it dual function} of $f$.

\begin{theorem}[Duality principle]\label{theo-duality}
A complex-valued function $f:W\to\cn$ is proper r-harmonic if and only if its dual $f^*:W^*\to\cn$ is proper r-harmonic.
\end{theorem}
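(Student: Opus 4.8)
The plan is to realize the Laplace--Beltrami operators of $G$ and of $U$ as the two real forms of a single \emph{holomorphic} second order operator on $G^{\cn}$, and then to transfer vanishing statements between $G$ and $U$ by the identity theorem for holomorphic functions. First I would exploit that the metric in question, being a negative multiple of the $\Ad$-invariant Killing form of $\g^{\cn}$, is bi-invariant; hence the Levi--Civita connection of $G^{\cn}$ and of each of its subgroups satisfies $\nabla_X Y=\tfrac12[X,Y]$ on left-invariant fields, so $\nabla_Z Z=0$. Consequently, for any pseudo-orthonormal basis $\{Z_i\}$ with $g(Z_i,Z_i)=\varepsilon_i\in\{+1,-1\}$ the tension field reduces to $\tau(\phi)=\sum_i\varepsilon_i\,Z_i^2(\phi)$, where $Z_i^2$ is the iterated left-invariant derivative as in \eqref{eq:derivativeZZ}. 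Since $g$ is negative definite on $\p$ and positive definite on $\k$, I would choose bases $\{X_a\}$ of $\k$ with $g(X_a,X_a)=1$ and $\{Y_b\}$ of $\p$ with $g(Y_b,Y_b)=-1$; then $\{X_a\}\cup\{Y_b\}$ is a pseudo-orthonormal basis of $\g$ and $\{X_a\}\cup\{iY_b\}$ is an orthonormal basis of $\un=\k+i\p$, so that
$$
\tau_G(\phi)=\sum_a X_a^2(\phi)-\sum_b Y_b^2(\phi),\qquad
\tau_U(\psi)=\sum_a X_a^2(\psi)+\sum_b (iY_b)^2(\psi).
$$

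Next I would move to $G^{\cn}$. Viewing $X_a,Y_b\in\g^{\cn}$ as holomorphic left-invariant vector fields, i.e. as the holomorphic operators $Z(F)(z)=\frac{d}{ds}F(z\exp(sZ))\big|_{s=0}$ acting on holomorphic $F$, one checks that $Z\mapsto Z(F)$ is $\cn$-linear, whence $(iY_b)^2(F)=-Y_b^2(F)$. Therefore the holomorphic operator $\tau^{\cn}:=\sum_a X_a^2-\sum_b Y_b^2$ restricts on the (totally real) submanifold $G$ to $\tau_G$ and, after rewriting $-\sum_b Y_b^2=\sum_b (iY_b)^2$, restricts on $U$ to $\tau_U$. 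Given a real analytic $f\colon W\to\cn$ with holomorphic extension $f^{\cn}\colon W^{\cn}\to\cn$, where I take $W$ (hence $W^{\cn}$) connected, the function $(\tau^{\cn})^{r}(f^{\cn})$ is holomorphic on $W^{\cn}$; since the curve $s\mapsto z\exp(sX_a)$ stays in $G$ for $z\in G$, and $s\mapsto z\exp(s\,iY_b)$ stays in $U$ for $z\in U$, an induction on $r$ gives that $(\tau^{\cn})^{r}(f^{\cn})$ restricts to $\tau_G^{r}(f)$ on $W=G\cap W^{\cn}$ and to $\tau_U^{r}(f^{*})$ on $W^{*}=U\cap W^{\cn}$. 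In particular $(\tau_G^{r}f)^{*}=\tau_U^{r}(f^{*})$, so passing to the dual commutes with iterating the tension field.

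Finally I would invoke unique continuation: $G$ and $U$ are totally real submanifolds of $G^{\cn}$ of real dimension $\dim_{\cn}G^{\cn}$, so a holomorphic function on the connected open set $W^{\cn}$ that vanishes on a nonempty open subset of $W=G\cap W^{\cn}$, or of $W^{*}=U\cap W^{\cn}$, must vanish identically on $W^{\cn}$. Applying this to $F_k:=(\tau^{\cn})^{k}(f^{\cn})$ yields, for every $k$,
$$
\tau_G^{k}(f)\equiv0\ \text{on}\ W\quad\Longleftrightarrow\quad F_k\equiv0\ \text{on}\ W^{\cn}\quad\Longleftrightarrow\quad\tau_U^{k}(f^{*})\equiv0\ \text{on}\ W^{*}.
$$
Taking $k=r$ and $k=r-1$ and combining the two chains of equivalences shows that $\tau_G^{r}f=0$ with $\tau_G^{r-1}f\not\equiv0$ holds exactly when $\tau_U^{r}f^{*}=0$ with $\tau_U^{r-1}f^{*}\not\equiv0$, i.e. $f$ is proper $r$-harmonic if and only if $f^{*}$ is.

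I expect the only real obstacle to lie in this last step, namely in the bookkeeping that makes the identity theorem genuinely applicable across the two real forms: one must arrange the holomorphic extension so that a single connected $W^{\cn}$ simultaneously exhibits $W$ and $W^{*}$ as open subsets of $G$ and of $U$, and must be careful when $W$ is disconnected. This is precisely the complex-analytic framework developed in \cite{Gud-Sve-1}, which I would cite; everything else reduces to the short computation above identifying $\tau_G$ and $\tau_U$ as the restrictions of the one holomorphic operator $\tau^{\cn}$.
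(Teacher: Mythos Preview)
Your proof is correct and follows essentially the same route as the paper: both reduce to the observation that for holomorphic $F$ one has $(iY)^2F=-Y^2F$, so the Laplace--Beltrami operators on $G$ and on $U$ coincide (up to an overall sign) through the holomorphic extension. The paper packages this as the identity $\tau(f^{*})=-(\tau(f))^{*}$ and iterates, leaving the unique-continuation step implicit, whereas you phrase it via a single holomorphic operator $\tau^{\cn}$ and make the identity-theorem step explicit; these are the same argument in slightly different dress.
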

\begin{proof}
It is sufficient to show that
\begin{equation}\label{eq:tau-duality}
\tau(f^{\ast})=-(\tau(f))^{\ast}.
\end{equation}
Let the left-invariant vector fields $X_1,\dots,X_n\in\p$ form a global orthonormal frame for the distribution generated by $\p$
and similarly $Y_1,\dots,Y_m\in\k$ form a global orthonormal frame for the distribution generated by $\k$. Now, we compute separately both sides of \eqref{eq:tau-duality}. According to the semi-Riemannian version of \eqref{tau-kappa-alie-groups}, we have
$$
\tau (f)=-\sum_{k=1}^m Y_k^{2}(f) +\sum_{k=1}^n
X_k^{2}(f).
$$
Next,
\begin{eqnarray}\label{eq-tau-star1}
[\tau (f)]^*&=&-\sum_{k=1}^m [Y_k^{2}(f)]^* +\sum_{k=1}^n[X_k^{2}(f)]^*\nonumber\\
&=&-\sum_{k=1}^m\left( [Y_k^{2}(f)]^{\C}\right)\Bigl\lvert_{U} +
\sum_{k=1}^n \left([X_k^{2}(f)]^{\C}\right)\Bigl\lvert_{U}\nonumber\\
&=&-\sum_{k=1}^m\left( [Y_k(Y_k (f))^{\C})]\right)\Bigl\lvert_{U} +
\sum_{k=1}^n \left([X_k(X_k (f))^{\C})]\right)\Bigl\lvert_{U}\nonumber\\
&=&-\sum_{k=1}^m\left( [Y_k(Y_k (f^{\C}))]\right)\Bigl\lvert_{U} +
\sum_{k=1}^n \left([X_k(X_k (f^{\C}))]\right)\Bigl\lvert_{U}\nonumber\\
&=&-\sum_{k=1}^m\left( Y_k^2(f^{\C})\right)\Bigl\lvert_{U} +
\sum_{k=1}^n \left(X_k^2 (f^{\C})\right)\Bigl\lvert_{U}.
\end{eqnarray}
On the other hand, the holomorphicity of $f^{\C}$ implies
\begin{eqnarray}\label{eq-tau-star2}
\tau (f^*)&=&\sum_{k=1}^m [Y_k^{2}(f^*)]
+\sum_{k=1}^n[(i\,X_k)^{2}(f^*)]\nonumber\\
&=&\sum_{k=1}^m [Y_k^{2}(f^{\C}\Bigl\lvert_{U})]
+\sum_{k=1}^n[(i\,X_k)^{2}(f^{\C}\Bigl\lvert_{U})]\nonumber\\
&=&\sum_{k=1}^m\left( Y_k^2(f^{\C})\right)\Bigl\lvert_{U}
-\sum_{k=1}^n \left(X_k^2 (f^{\C})\right)\Bigl\lvert_{U}.
\end{eqnarray}
Finally, comparing \eqref{eq-tau-star1} and \eqref{eq-tau-star2} we obtain \eqref{eq:tau-duality}.
\end{proof}

\begin{remark}\label{re:duality}
We point out that a function $f:W\to\cn$ is $K$-invariant if and only if its dual $f^*:W^*\to\cn$ is $K$-invariant. In particular, the duality principle of Theorem~\ref{theo-duality} is valid for the corresponding functions on the quotient spaces.
\end{remark}

\begin{example}
Let $f^*:W^*\subset\SO{4} \to\cn$ be the complex-valued function defined
locally on $\SO{4}$ by
$$
f^*:x\mapsto\frac{p_1 x_{11}+p_2 x_{21}+p_3 x_{31}+p_4 x_{41}}{q_1 x_{11}+q_2 x_{21}+q_3 x_{31}+q_4 x_{41}},
$$
where $p,q\in\cn^4$ are linearly independent, $(q,q)=0$ and $(p,q)\neq 0$.
According to Theorem~\ref{teo-quotient-linear}, we know that $f^*:W^*\to\cn$ is proper biharmonic. According to Example 7.2 in \cite{Gud-Sve-1}, the dual function $f:W\subset\SOO 13\to\cn$  is given by
$$f:x\mapsto\frac{p_1 x_{11}+p_2 (-i x_{21})+p_3 (-i x_{31})+p_4 (-i x_{41})}{q_1 x_{11}+q_2 (-i x_{21})+q_3 (-i x_{31})+q_4 (-i x_{41})}.
$$
The duality principle of Theorem~\ref{theo-duality} tells us that $f$ is also proper biharmonic. The functions $f$ and $f^*$ are both
$\SO 3$-invariant so, by Remark~\ref{re:duality}, they induce biharmonic functions $h:U\subset\H^3\to\cn$ and  $h^*:U^*\subset\s^3\to\cn$ on the quotient spaces of left cosets
$$\H^3=\SOO 13/\SO 3\ \ \text{and}\ \ \s^3=\SO{4}/\SO 3.$$ It is worth to point out that, by means of this construction, we produced an equivalent description of the proper biharmonic functions  given in Examples~\ref{example-proper-biharmonic-S^3} and~\ref{example-hyperbolic} and obtained a geometric relationship between the two families of examples.

\end{example}

\appendix

\section{}\label{general-F(z)}

The complex general linear group $\GLC n$ is an open subset of $\cn^{n\times n}\cong\cn^{n^2}$ and inherits its standard complex structure.  As before, let $z_{ij}$ with $1\leq i,\,j \leq n$ denote the matrix coefficients of a generic element $z\in\GLC n$.
Let $G$ be a Lie subgroup of $\GLC n$, $F:U\to\cn$ be a holomorphic function defined on an open subset of $\GLC n$ and $f:U\cap G\to\cn$ be the restriction of $F$ to $G$.

Following Lemma 3.2 of \cite{Gud-5}, we can use \eqref{eq:derivativeZ}, \eqref{eq:derivativeZZ} and \eqref{tau-kappa-alie-groups} to compute the tension field of $f$
\begin{equation}\label{tension-general-F}
\tau(f)= \sum_{1\leq i,j,k,\ell \leq n} \frac {\partial ^2 f}{\partial z_{ij}\partial z_{k\ell}}\,\, \kappa(z_{ij},z_{k\ell}) \,+\,
\sum_{1\leq i,j\leq n} \frac {\partial  f}{\partial z_{ij} } \, \, \tau (z_{ij}) \,\, .
\end{equation}

\begin{example}
For the unitary group $\U n$ we can utilize Lemma~\ref{lemm:complex} to make formula \eqref{tension-general-F} more explicit
\begin{equation*}\label{tension-general-F-su-U(n)}
\tau(f)= - \sum_{1\leq i,j,k,\ell \leq n} \frac {\partial ^2 f}{\partial z_{ij}\partial z_{k\ell}}\, z_{kj}\, z_{i\ell}\, -n
\sum_{1\leq i,j\leq n} \frac {\partial  f}{\partial z_{ij} } \, z_{ij}\, .
\end{equation*}
It can be shown that in this case the bitension field satisfies
\begin{eqnarray*}
\tau^2(f)&=&-\,\, \sum_{i,j,k,\ell,\alpha,\beta,\gamma,\delta=1}^n\ \frac{\partial ^4 f}{\partial z_{ij}\partial z_{k\ell}\partial z_{\alpha \beta}\partial z_{\gamma \delta}} \,\,\,z_{i\ell} z_{kj} z_{\alpha \delta} z_{\gamma \beta}  \\ \nonumber
& &-\sum_{i,j,k,\ell,\alpha,\beta=1}^n\ \frac{\partial ^3 f}{\partial z_{ij}\partial z_{k\ell}\partial z_{\alpha \beta}} \, [2 z_{kj} z_{\alpha \ell} z_{i \beta}+2 z_{i \ell} z_{\alpha j} z_{k \beta}\\
& &\qquad\qquad\qquad\qquad\qquad\qquad +\ n\, z_{ij} z_{\alpha \ell} z_{k \beta}+n\, z_{i \ell} z_{kj} z_{\alpha \beta}]\\ \nonumber
& &-\sum_{i,j,k,\ell=1}^n \ \frac{\partial ^2 f}{\partial z_{ij}\partial z_{k\ell}} \,\, [(n^2+2) z_{ij} z_{k \ell} +4n\, z_{i \ell} z_{k j} ] \\ \nonumber
& &- \ n^2 \,\,\sum_{i,j=1}^n \ \frac{\partial f}{\partial z_{ij}} \, \, z_{ij}  \, . \\ \nonumber
\end{eqnarray*}
\end{example}

\begin{example}
For the special orthogonal group $\SO n$, Lemma~\ref{lemm:real} gives to the following version of \eqref{tension-general-F}
\begin{eqnarray*}\label{tension-general-F-su-O(n)}
\tau(f)&=& - \frac{1}{2} \, \sum_{1\leq i,j,k,\ell \leq n} \frac {\partial ^2 f}{\partial x_{ij}\partial x_{k\ell}}\,x_{kj}\, x_{i\ell}\\
& &\qquad\qquad\qquad +\frac{1}{2}\, \sum_{1\leq i,j\leq n}\,\frac {\partial ^2 f}{\partial x_{ij}^2}\, \, -\frac{(n-1)}{2}
\sum_{1\leq i,j\leq n} \frac {\partial  f}{\partial x_{ij} } \, x_{ij} \,\, .
\end{eqnarray*}
\end{example}

It is our hope that the ideas presented in this appendix will be helpful for carrying out computer calculations in the spirit of Remark~\ref{remark-other-examples} above.

\end{document}